\lstdefinestyle{mystyle}{language=C++,
	backgroundcolor=\color{white},   
	commentstyle=\color{codegreen},
	keywordstyle=\color{purple},
	numberstyle=\tiny\color{gray},
	stringstyle=\color{codepurple},
	basicstyle=\ttfamily\footnotesize,
	morekeywords={,size, logical\_or, to, step\_size, function, in, foreach,}
	breakatwhitespace=false,         
	breaklines=true,                 
	captionpos=b,                    
	keepspaces=true,                 
	numbers=left,                    
	numbersep=5pt,                  
	showspaces=false,                
	showstringspaces=false,
	showtabs=false,                  
	tabsize=4
}
\newtheorem*{maintheorem*}{Main Theorem}
\newtheorem{theorem}{Theorem}[section]
\newtheorem*{theorem*}{Main Theorem}
\newtheorem{prop}[theorem]{Proposition}
\newtheorem{lemma}[theorem]{Lemma}
\newtheorem{cor}[theorem]{Corollary}
\theoremstyle{definition}
\newtheorem{definition}[theorem]{Definition}
\newtheorem{example}[theorem]{Example}
\numberwithin{equation}{section}
\newcommand{\ee}{\mathbb{E}}
\newcommand{\nn}{\mathbb{N}}
\newcommand{\pp}{\mathbb{P}}
\newcommand{\aaa}{\mathcal{A}}
\newcommand{\ccc}{\mathcal{C}}
\newcommand{\ppp}{\mathcal{P}}
\newcommand{\qq}{\mathbb{Q}}
\newcommand{\rr}{\mathbb{R}}
\newcommand{\zz}{\mathbb{Z}}
\newcommand{\uu}{\mathcal{U}}
\providecommand\ldb{\llbracket}
\providecommand\rdb{\rrbracket}
\newcommand{\Pfin}{\mathcal{P}_{\mathrm{fin}}}
\newcommand{\Pfino}{\mathcal{P}_{\mathrm{fin},0}}
\newcommand{\Pfinu}{\mathcal{P}_{\text{fin}, \uu}}
\keywords{power monoid, restricted power monoid, atomic monoid, numerical monoid, atomic density}
\subjclass[2010]{Primary: 13A05, 11Y05; Secondary: 06F05}
\begin{document}
	
\mbox{}
\title{On primality and atomicity of numerical power monoids}

\author{Anay Aggarwal}
\address{PRIMES USA\\MIT\\Cambridge, MA 02139}
\email{anayagga@pdx.edu}

\author{Felix Gotti}
\address{Department of Mathematics\\MIT\\Cambridge, MA 02139}
\email{fgotti@mit.edu}

\author{Susie Lu}
\address{PRIMES USA\\MIT\\Cambridge, MA 02139}
\email{SusieLu@ohs.stanford.edu}

\date{\today}
	
\begin{abstract}
	Given a commutative monoid $M$, the finitary power monoid of $M$, denoted by $\Pfin(M)$ is the monoid consisting of all nonempty finite subsets of $M$ under the so-called sumset operation. The submonoid of $\Pfin(M)$ whose elements are those sets containing at least one invertible element of $M$ is called the restricted finitary power monoid of $M$ and is denoted by $\Pfinu(M)$. 
	
	In the first part of this paper, we establish a variation of a recent result by Bienvenu and Geroldinger on the (almost) non-existence of absolute irreducibles in (restricted) power monoids of numerical monoids: we argue the (almost) non-existence of primal elements in the same class of power monoids. The second part of this paper, devoted to the study of the atomic density of $\mathcal{P}_{\text{fin}, 0}(\mathbb{N}_0)$, is motivated by work of Shitov, a recent paper by Bienvenu and Geroldinger, and some questions pointed out by Geroldinger and Tringali. In the same, we study atomic density through the lens of the natural partition $\{ \mathcal{A}_{n,k} : k \in \mathbb{N}_0\}$ of $\mathcal{A}_n$, the set of atoms of $\mathcal{P}_{\text{fin}, 0}(\mathbb{N}_0)$ with maximum at most $n$: \[ \mathcal{A}_{n,k} = \{A \in \mathcal{A} : \max A \le n \text{ and } |A| = k\} \] for all $n,k \in \mathbb{N}$, where $\mathcal{A}$ is the set of atoms of $\mathcal{P}_{\text{fin}, 0}(\mathbb{N}_0)$. We pay special attention to the sequence $(\alpha_{n,k})_{n,k \ge 1}$, where $\alpha_{n,k}$ denote the size of the block $\mathcal{A}_{n,k}$. First, we establish some bounds and provide some asymptotic results for $(\alpha_{n,k})_{n,k \ge 1}$. Then, we take some probabilistic approach to argue that, for each $n \in \mathbb{N}$, the sequence $(\alpha_{n,k})_{k \ge 1}$ is almost unimodal. Finally, for each $n \in \mathbb{N}$, we consider the random variable $X_n : \mathcal{A}_n \to \mathbb{N}_0$ defined by the assignments $X_n : A \mapsto |A|$, whose probability mass function is $\mathbb{P}(X_n=k) = \alpha_{n,k}/| \mathcal{A}_n|$. We conclude proving that, for each $m \in \mathbb{N}$, the sequence of moments $(\mathbb{E}(X_n^m))_{n \ge 1}$ behaves asymptotically as that of a sequence $(\mathbb{E}(Y_n^m))_{n \ge 1}$, where $Y_n$ is a binomially distributed random variable with parameters $n$ and $\frac12$.

\end{abstract}

\maketitle

\bigskip
\section{Introduction}
\label{sec:intro}

Let $M$ be an (additively written) commutative semigroup with an identity element, called a monoid in the scope of this paper. The set $\Pfin(M)$ consisting of all nonempty finite subsets of $M$ is also a monoid under the sumset operation in~\eqref{eq:sumset}: for nonempty finite subsets $B$ and $C$ of $M$,
\begin{equation} \label{eq:sumset}
	B+C := \{b+c : (b,c) \in B \times C \}.
\end{equation}
The monoid $\Pfin(M)$ is called the (finitary) power monoid of $M$, while the submonoid $\mathcal{P}_{\text{fin}, \uu}(M)$ of $\Pfin(M)$ consisting of all finite nonempty subsets of $M$ containing at least one invertible element is called the restricted (finitary) power monoid of~$M$. Power monoids and restricted power monoids seem to be first investigated by Shafer and Tamura~\cite{TS67} back in the sixties. On the other hand, power monoids/semigroups have played a crucial role in the development of the automata theory and formal languages (see \cite{jA02,jP95} and references therein).
\smallskip

After the appearance of~\cite{FT18}, where Fan and Tringali established some connections between factorization theory and arithmetic combinatorics, a significant amount of interest in (restricted) power monoids seem to have resurfaced, especially from the viewpoint of atomicity and factorization theory~\cite{AT21,GLRRT24} and in connection with certain instances of the power-monoid isomorphism problem~\cite{sT24,TY23,TY24}, which is the problem of determining, given a class $\ccc$ of monoids, whether the (restricted) power monoid of each monoid $M$ in $\ccc$ uniquely determines $M$ (up to isomorphism). As a consequence, various authors have made recent contributions to the literature of (finitary) power monoids (see~\cite{TY23} and some of the references therein).
\smallskip

The present paper is our contribution to this recent trend, and it is dedicated to study the class consisting of (restricted) power monoids of numerical monoids. This class of monoids was first and recently investigated by Bienvenu and Geroldinger in~\cite{BG24}, which is, in many respects, the paper motivating several results we establish here. In their paper, Bienvenu and Geroldinger studied certain atomic and ideal-theoretical aspects of $\Pfin(N)$ and $\Pfino(N)$, which they call the numerical power monoid and the restricted numerical power monoid of $N$, where $N$ is a numerical monoid (the monoid $\Pfino(N)$ is simply $\mathcal{P}_{\text{fin}, \uu}(M)$, but the former notation is more convenient to emphasize that the only invertible element of a numerical monoid is~$0$). Also recently, atomic and factorization aspects of (finitary) power monoids of additive submonoids of $\qq_{\ge 0}$ (which generalize numerical monoids and account for all rank-one torsion-free monoids up to isomorphism) have been investigated by Gonzalez et al. in~\cite{GLRRT24}.
\smallskip

In Section~\ref{sec:background}, we briefly revise some fundamental notation and terminology we will use throughout this paper.
\smallskip

In Section~\ref{sec:primal elements}, we consider the notions of primal elements to establish a generalization of the non-existence of primes in (restricted) numerical power monoids (with the exception of $\Pfin(\nn_0)$, which contains the prime $\{1\}$). Our main result in this direction is motivated by~\cite[Theorem~4.11]{BG24}, which states that the only numerical power monoid that contains absolute irreducibles is $\Pfin(\nn_0)$ (whose only absolutely irreducible is the singleton~$\{1\}$), while no restricted numerical power monoid contains absolute irreducibles. We say that an irreducible element $a \in M$ (also called an atom) is an absolute irreducible if $na$ has a unique factorization for every $n \in \nn$. Also, we say that a non-invertible element $p \in M$ is a primal element if for any $b,c \in M$ the (additive) divisibility relation $p \mid_M b+c$ ensures the existence of $b',c' \in M$ with $p = b'+c'$ such that $b' \mid_M b$ and $c' \mid_M c$. The notion of a primal element was introduced by Cohn~\cite{pC68} back in the sixties in his study of Schreier domains. We establish here the following variation of~\cite[Theorem~4.11]{BG24}: restricted numerical power monoids do not contain primal elements, while the only numerical power monoid containing primal elements is $\Pfin(\nn_0)$, whose only primal element is~$\{1\}$.
\smallskip

Section~\ref{sec:a 2-dim graded partition of atoms} is devoted to the study of atomic density in the monoids $\Pfino(\nn_0)$. Our study is motivated by the work of Shitov~\cite{yS14} and the more recent work of Bienvenu anda Geroldinger~\cite[Section~6]{BG24}, where the authors carry out the first investigation of atomic density in (restricted) numerical power monoids. They consider, for each $n \in \nn$, the quotient $q_n$ between the number of atoms with maximum $n$ and the total number of elements with maximum $n$. For restricted numerical monoids, they found that $\lim_{n \to \infty} q_n = 1$ and, although this result was previously established by Shitov~\cite{yS14} in the language of Boolean polynomials, they improved upon the decay rate of the number of non-atoms. Here we will further partition each set
\[
	\aaa_n := \{A \in \mathcal{A}(\Pfino(\nn_0)) : \max A \le n \}
\]
into blocks by size, obtaining the set of blocks $\{ \aaa_{n,k} : k \in \nn \}$. Hence if $\aaa$ is the set of atoms of $\Pfino(\nn_0)$, then
\[
	 \aaa_{n,k} = \{A \in \aaa : \max A \le n \text{ and } |A| = k\}
\]
for every $k \in \nn$. This will allow us to look at the atomic density of $\Pfino(\nn_0)$ through the lens of the sequence $(\alpha_{n,k})_{n,k \ge 1}$, where $\alpha_{n,k} = |\aaa_{n,k}|$ for all $n,k \in \nn_0$. First, we establish some bounds and asymptotic results for $(\alpha_{n,k})_{n,k \ge 1}$. Then, we take some probabilistic approach to argue that, for each $n \in \nn$,  the sequence $(\alpha_{n,k})_{k \ge 1}$ is almost unimodal. Lastly, for each $n \in \nn$, we consider the random variable $X_n : \aaa_n \to \nn_0$ defined by the assignments $X_n \colon A \mapsto |A|$ whose probability mass function is
\[
	\pp(X_n=k) = \frac{\alpha_{n,k}}{|\aaa_n|}.
\]
We conclude this paper by proving that, for each $m \in \nn$, the sequence of moments $(\ee(X_n^m))_{n \ge 1}$ behaves asymptotically as the sequence $(\ee(Y_n^m))_{n \ge 1}$, where $Y_n$ is a binomially distributed random variable with parameters $n$ and $\frac12$.

\bigskip
\section{Background}
\label{sec:background}

\smallskip
\subsection{General Notation}

As is customary, $\zz$, $\qq$, and $\rr$ will denote the set of integers, rational numbers, and real numbers, respectively. In addition, we let $\pp$, $\nn$, and $\nn_0$ denote the set of rational primes, positive integers, and nonnegative integers, respectively. For any $b,c \in \zz$,
\[
	\ldb b,c \rdb := \{n \in \zz : b \le n \le c\}.
\]
If $S \subseteq \rr$ and $r \in \rr$, then we set $S_{\ge r} := \{s \in S : s \ge r\}$ and $S_{> r} := \{s \in S : s > r\}$. Finally, for each nonzero $q \in \qq$, we let $\mathsf{n}(q)$ and $\mathsf{d}(q)$ denote, respectively, the unique $n \in \zz$ and $d \in \nn$ such that $q = \frac{n}d$ and $\gcd(n,d) = 1$.

\medskip
\subsection{Commutative Monoids}

Let $S$ be a commutative semigroup with identity. The group of invertible elements of $S$ is denoted by $\uu(S)$, and we say that $S$ is \emph{reduced} if $\uu(S)$ is the trivial group. To avoid cumbersome terminology, throughout this paper we reserve the term \emph{monoid}\footnote{In the standard literature, a monoid is defined to be a semigroup with an identity element.} for a cancellative, commutative, and reduced semigroup with an identity element. Furthermore, we tacitly assume that all monoids we mentioned here are written additively. Let $M$ be a monoid. We say that~$M$ is \emph{unit-cancellative} if the only pairs $(b,c) \in M \times M$ such that $b + c = b$ are those with $c = 0$. We obtain directly from the corresponding definitions that every cancellative monoid is unit-cancellative. For a subset $S$ of $M$, we let $\langle S \rangle$ denote the smallest submonoid of $M$ containing~$S$, and~$S$ is a \emph{generating set} of~$M$ provided that $M = \langle S \rangle$. We say that $M$ is \emph{finitely generated} if $M$ has a finite generating set.  
\smallskip

Additive submonoids of $\qq$ consisting of nonnegative rationals are referred to as \emph{Puiseux monoids} and they account, up to isomorphism, for all rank-one torsion-free monoids that are not groups (see \cite[Theorem~3.12.1]{GGT21}). In Section~\ref{sec:weaker notions of primes}, Puiseux monoids will be use as illustrative examples. The additive submonoids of $\nn_0$ account, up to isomorphism, for all finitely generated Puiseux monoids. A co-finite additive submonoid of~$\nn_0$ is called a \emph{numerical monoid}, and numerical monoids account up to isomorphism for all nontrivial submonoids of $\nn_0$. The \emph{Frobenius number} of a numerical monoid $N$ distinct from $\nn_0$ is $\max \, \nn_0 \setminus N$.
\smallskip

For $b,c \in M$, we say that $b$ (\emph{additively}) \emph{divides} $c$ in $M$ if $c = b + b'$ for some $b' \in M$, in which case, we write $b \mid_M c$. A submonoid $S$ of $M$ is called \emph{divisor-closed} provided that for all $b \in M$ and $c \in S$ the relation $b \mid_M c$ implies that $b \in S$. A non-invertible element $p \in M$ is called a \emph{prime} if for any $b,c \in M$ the relation $p \mid_M b+c$ implies either $p \mid_M b$ or $p \mid_M c$. An non-invertible element $p \in M$ is called \emph{primary} if for any $b,c \in M$ the the fact that $p \mid_M b+c$ ensures that either $p \mid_M b$ or $p \mid_M nc$ for some $n \in \nn$. It is clear that every prime element of $M$ is primary. Every nonzero element in a numerical monoid is primary, but the only numerical monoid containing primes is the free monoid $\nn_0$, whose only prime is $1$. Following Cohn~\cite{pC68}, we say that a non-invertible element $p \in M$ is \emph{primal}\footnote{In the definition of a primal element given by Cohn in~\cite{pC68}, he did not exclude invertible elements from the set of primal elements. However, we have done so here because we are thinking of primal elements as generalizations of prime elements.} if for any $b,c \in M$ the fact that $p \mid_M b+c$  ensures the existence of $b',c' \in M$ with $p = b'+c'$ such that $b' \mid_M b$ and $c' \mid_M c$. Clearly, if an element of $M$ is prime, then it must be primal. The only numerical monoid containing primal elements is $\nn_0$, where every nonzero element is primal.

\medskip
\subsection{Power Monoids}

As the notions of a primary/primal element given in the previous paragraph, the notion of an atom is a natural generalization of the notion of a prime. An non-invertible element $a \in M$ is called an \emph{atom} if whenever $a = u+v$ for some $u,v \in M$, then either $u$ or $ \in \uu(M)$ or $v \in \uu(M)$. Observe that an element is prime if and only if it is simultaneously an atom and a primal element. The set of atoms of~$M$ is denoted by $\mathcal{A}(M)$ and plays a crucial role in the scope of this paper. The monoid $M$ is \emph{atomic} if every non-invertible element can be written as a sum of atoms. Furthermore, an atomic monoid is called a \emph{unique factorization monoid} (UFM) if every non-invertible element has a unique factorization into atoms (up to order and associates). Inside a UFM, every nonzero element is primal. When $M$ is an atomic monoid, an atom $a \in M$ is called a \emph{strong atom} or an \emph{absolute irreducible}) provided that, for all $n \in \nn$, the only atom dividing $na$ is $a$ up to associate. Absolute irreducible elements are clearly primary.
\smallskip

Let $M$ be a monoid. Following \cite{FT18}, we let $\mathcal{P}_{\text{fin}}(M)$ denote the set consisting of all the nonempty finite subsets of~$M$, which is a monoid under \emph{sumset}, the following binary operation:
\[
	B+C := \{b+c : (b,c) \in B \times C \},
\]
where $B,C \in \mathcal{P}_{\text{fin}}(M)$. The monoid $\mathcal{P}_{\text{fin}}(M)$ is called the \emph{finitary power monoid} of $M$, although throughout this paper we will simply call $\mathcal{P}_{\text{fin}}(M)$ the \emph{power monoid} of $M$\footnote{It is worth emphasizing that the standard term `power monoid' is reserved in the literature for the set of all nonempty subsets of $M$ under sumset (without the finiteness restriction on the subsets).}. The submonoid
\[
	\mathcal{P}_{\text{fin}, 0}(M) := \{S \in \mathcal{P}_{\text{fin}}(M) : 0 \in S \}
\]
of $\Pfin(M)$ is called the \emph{restricted power monoid} of $M$. The (restricted) power monoid of any linearly orderable monoid is unit-cancellative \cite[Proposition~3.5(ii)]{FT18}.

When the monoid $M$ is reduced, the power monoid of any submonoid of $M$ is a divisor-closed submonoid of $\Pfino(M)$, as we proceed to verify.

\begin{prop} \label{prop:divisor-closed submonoids}
	Let $M$ be a reduced monoid, and let $N$ be a submonoid of $M$. Then the following statements hold.
	\begin{enumerate}
		\item $\Pfino(N)$ is a divisor-closed submonoid of $\Pfino(M)$.
		\smallskip
		
		\item $\mathcal{A}(\Pfino(N)) = \Pfino(N) \cap \mathcal{A}(\Pfino(M))$.
	\end{enumerate} 
\end{prop}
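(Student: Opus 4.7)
The plan is to exploit the fact that every element of $\Pfino(M)$ contains $0$, which makes divisibility inside $\Pfino(M)$ particularly transparent and forces any would-be divisor to already live in $N$ whenever the dividend does.

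For part (1), I would first observe that $\Pfino(N)$ is indeed a submonoid of $\Pfino(M)$: the identity $\{0\}$ belongs to $\Pfino(N)$, and if $B, C \in \Pfino(N)$ then $B + C$ is a finite subset of $N$ containing $0 = 0 + 0$. To prove divisor-closedness, suppose $C \in \Pfino(N)$ and $C = B + D$ with $B, D \in \Pfino(M)$. Since $0 \in D$, we have $B = B + \{0\} \subseteq B + D = C \subseteq N$, so $B$ is a nonempty finite subset of $N$ containing $0$; that is, $B \in \Pfino(N)$.

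For part (2), I would first pin down the invertible elements: the only invertible element of $\Pfino(M)$ is $\{0\}$, because if $A + B = \{0\}$ with $A, B \in \Pfino(M)$, then $0 \in B$ forces every $a \in A$ to satisfy $a = a + 0 \in \{0\}$, so $A = \{0\}$. (Notice that the hypothesis that $M$ be reduced is not even used here; it is the ``$0 \in$'' clause that does the work.) The same statement holds inside $\Pfino(N)$, so ``non-invertible'' means ``not equal to $\{0\}$'' in either monoid. With this, both inclusions are immediate from part (1). Indeed, if $A \in \mathcal{A}(\Pfino(N))$ and $A = U + V$ in $\Pfino(M)$, then by divisor-closedness $U, V \in \Pfino(N)$, so atomicity of $A$ in $\Pfino(N)$ gives $U = \{0\}$ or $V = \{0\}$. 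Conversely, if $A \in \Pfino(N) \cap \mathcal{A}(\Pfino(M))$ and $A = U + V$ in $\Pfino(N)$, the same decomposition lives in $\Pfino(M)$, and atomicity of $A$ there forces $U = \{0\}$ or $V = \{0\}$.

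The proposition is essentially a bookkeeping exercise; there is no genuine obstacle. The only subtle point is verifying that ``atom of $\Pfino(N)$'' and ``atom of $\Pfino(M)$ lying in $\Pfino(N)$'' refer to the same notion of non-invertibility, and this is precisely what the pinning down of invertible elements accomplishes. Once divisor-closedness is in hand, part (2) is the standard consequence that atoms are preserved in both directions when one passes to a divisor-closed submonoid.
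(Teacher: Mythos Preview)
Your proof is correct and follows essentially the same approach as the paper: for part~(1) you use $0 \in D$ to get $B \subseteq B + D = C \subseteq N$, exactly as the paper does, and for part~(2) you derive both inclusions from divisor-closedness and reducedness of $\Pfino(M)$. Your version is slightly more explicit (you verify the submonoid property and identify the invertible elements directly), while the paper simply asserts that $\Pfino(M)$ is reduced and invokes the standard fact that atoms transfer across divisor-closed submonoids; but the underlying argument is the same.
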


\begin{proof}
	(1) Fix $S \in \Pfino(N)$ and suppose that $S' \in \Pfino(M)$ divides $S$ in $\Pfino(M)$. Then take $T' \in \Pfino(M)$ such that $S' + T' = S$. As  $0 \in T'$, the inclusion $S' \subseteq S$ holds. Hence $S' \subseteq N$, and so $S' \in \Pfino(N)$. Thus, $\Pfino(N)$ is a divisor-closed submonoid of $\Pfino(M)$.
	\smallskip
	
	(2) Because the monoid $\Pfino(M)$ is reduced and contains $\Pfino(N)$ as a submonoid, the inclusion $\Pfino(N) \cap \mathcal{A}(\Pfino(M)) \subseteq \mathcal{A}(\Pfino(N))$ clearly holds. The reverse inclusion is an immediately consequence of the fact that $\Pfino(N)$ is a divisor-closed submonoid of $\Pfino(M)$.
\end{proof}

\bigskip
\section{A Variation of a Bienvenu-Geroldinger Theorem}
\label{sec:primal elements}

In this section, we begin our study of (restricted) power monoids of numerical monoids. The following terminology will turn out to be convenient.

\begin{definition}
	Let $N$ be a numerical monoid. We call $\mathcal{P}_{\text{fin}}(N)$ (resp., $\mathcal{P}_{\text{fin}, 0}(N)$) a \emph{numerical power monoid} (resp., a \emph{restricted numerical power monoid}).
\end{definition}

Since numerical monoids are linearly ordered monoid under the standard addition, (restricted) numerical power monoids are unit-cancellative. Also, as every numerical monoid is reduced, it follows from Proposition~\ref{prop:divisor-closed submonoids} that $\Pfino(N)$ is a divisor-closed submonoid of $\Pfino(\nn_0)$ for every numerical monoid~$N$: this allows us to reduce the proofs of many statements on the arithmetic/atomicity of $\Pfino(N)$ to $\Pfino(\nn_0)$. The submonoid of $\Pfin(\nn_0)$ consisting of all the singletons of $\nn_0$ is a divisor-closed submonoid of $\Pfin(\nn_0)$ (a characterization of the divisor-closed submonoids of $\Pfino(\nn_0)$ is given in~\cite[Theorem~4.3]{BG24}). In this direction, we can decompose $\Pfin(\nn_0)$ as follows \cite[Theorem~3.1]{BG24}:
\[
	\Pfin(\nn_0) = \big\{ \{n\} : n \in \nn_0 \big\} \oplus \Pfino(\nn_0).
\]

\medskip
\subsection{Primal Elements and Pre-schreier Monoids}

Recall that a non-invertible element $p \in M$ is primal if whenever $p \mid_M b+c$ for some $b,c \in M$, we can write $p = b'+c'$ for some $b', c' \in M$ such that $b' \mid_M b$ and $c' \mid_M c$. The monoid $M$ is called a \emph{pre-Schreier}\footnote{Schreier domains were introduced by Cohn in~\cite{pC68} as integrally closed domains whose nonunit elements are all primal elements.} monoid provided that every non-invertible element of $M$ is primal. Every UFM is a pre-Schreier monoid. Let us take a look at rank-one torsion-free monoids, which is one of the most natural classes generalizing that of numerical monoids.

\begin{example}
	Let $G$ be an additive subgroup of $\qq$ (i.e., a rank-one torsion-free abelian group), and set $M := G_{\ge 0}$. Observe that, for all $r,s \in M$, the conditions $r \le s$ and $r \mid_M s$ are equivalent. Fix a nonzero $p \in M$ and assume that  $p \mid_M b+c$ for some $b,c \in M$. Now write $p = b' + c'$, where $(b',c') := (b, p-b)$ if $p>b$ and $(b',c') := (p,0)$ if $p \le b$. In any case, we see that $b' \mid_M b$ and $c' \mid_M c$, whence $p$ is a primal element. Hence $M$ is a pre-Schreier monoid.
\end{example}

Numerical monoids, however, are almost never pre-Schreier monoids. The following example sheds some light upon this observation.

\begin{example}
	Let us argue that the only numerical monoid that is a pre-Schreier monoid is $\nn_0$. In order to do so, let $N$ be a numerical monoid with at least two atoms. Let $a_1$ and $a_2$ be two distinct atoms of $N$. Then take the minimum $m \in \nn$ such that $a_1 \mid_N m a_2$. Clearly, $m \ge 2$. After setting $b := a_2$ and $c := (m-1)a_2$, it is clear that $a_1 \mid_N b+c$ but as $a_1$ is an atom we cannot write $a_1 = b'+c'$ for some $b',c' \in N$ such that $b' \mid_N b$ and $c' \mid_N c$. Hence $N$ contains elements that are not primal and, therefore, it is not a pre-Schreier monoid.
\end{example}
%
%
%

\medskip
\subsection{Primal Elements in Numerical Power Monoids}

In order to prove the main theorem of this section, we need the following lemma.

\begin{lemma} \label{lem:divisibility by a simple atom} \label{lem:lem_ext}
	Let $M$ be a monoid. For $S \in \ppp_{\emph{fin},0}(M)$ and $a \in M$, the following conditions are equivalent.
	\begin{enumerate}
		\item[(a)] $\{ 0, a \}$ divides $S$ in $\ppp_{\emph{fin},0}(M)$.
		\smallskip
		
		\item[(b)] $\ldb 0, \ell \rdb a$ divides $S$ in $\Pfino(M)$ for some $\ell \in \nn$.
		\smallskip
		
		\item[(c)] For each $x \in S$, either $x-a \in S$ or $x+a \in S$.
	\end{enumerate} 
\end{lemma}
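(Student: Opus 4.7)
The plan is to establish the cyclic chain of implications (a) $\Rightarrow$ (b) $\Rightarrow$ (c) $\Rightarrow$ (a). The first two implications are essentially bookkeeping. For (a) $\Rightarrow$ (b), simply take $\ell = 1$, since $\{0,a\} = \ldb 0, 1 \rdb a$. For (b) $\Rightarrow$ (c), fix $T \in \Pfino(M)$ with $S = \ldb 0, \ell \rdb a + T$, and for $x \in S$ write $x = j a + t$ with $j \in \ldb 0, \ell \rdb$ and $t \in T$: if $j \ge 1$ then $x - a = (j-1) a + t \in S$, while if $j = 0$ then $j < \ell$ and $x + a = a + t \in S$.

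The content of the lemma lies in the implication (c) $\Rightarrow$ (a). I would partition $S$ by the equivalence relation given by $x \sim y$ whenever $x - y \in \zz a$ inside $\gp(M)$. Each equivalence class $C$ is a finite subset of a single coset of $\zz a$ and, choosing as base point the element $x_C \in C$ whose index is minimal, it can be written as $C = \{x_C + k a : k \in K_C\}$ for some finite $K_C \subseteq \nn_0$ with $0 \in K_C$. Hypothesis (c) then translates into the combinatorial statement that, for every $k \in K_C$, either $k - 1 \in K_C$ or $k + 1 \in K_C$; the boundary case $k = 0$ forces $1 \in K_C$ because $x_C - a$, if it were in $S$, would lie in $C$ and contradict the minimality of the index of $x_C$.

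The construction of $T$ with $\{0,a\} + T = S$ now reduces to an edge-cover argument on each $K_C$. Consider the graph $G_C$ with vertex set $K_C$ and edges $\{k, k+1\}$ whenever both endpoints lie in $K_C$; it is a subgraph of the integer path on $\nn_0$, hence a disjoint union of finite sub-paths, and the combinatorial property above says $G_C$ has no isolated vertex. Each sub-path therefore has at least two vertices and admits an explicit edge cover. Letting $L_C \subseteq K_C$ consist of the left endpoints of the chosen edges yields $K_C = L_C \cup (L_C + 1)$, and setting $T := \bigcup_C \{x_C + k a : k \in L_C\}$ gives $\{0,a\} + T = S$. For the class containing $0$ one has $x_C = 0$, and the only edge of $G_C$ incident to the vertex $0$ is $\{0,1\}$, which forces $0 \in L_C$ and hence $0 \in T$, so $T \in \Pfino(M)$. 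The main delicate point I anticipate is precisely this boundary bookkeeping — choosing the base points $x_C$ coherently so that the combinatorial condition reads cleanly on $K_C \subseteq \nn_0$ and the global $T$ actually contains $0$.
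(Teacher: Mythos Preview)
Your argument is correct, with one small caveat: you should dispose of the case $a = 0$ at the outset (all three conditions then hold trivially), since for $a = 0$ your coset decomposition degenerates to singletons and each $K_C = \{0\}$ would be an isolated vertex in $G_C$. For $a \neq 0$, your implicit use of the reducedness of $M$ to ensure that $a$ has infinite order in $\gp(M)$---so that indices along each coset are well defined and the class of $0$ has base point $x_C = 0$---is sound.

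Your route for (c) $\Rightarrow$ (a) is, however, more elaborate than the paper's. The paper simply sets $S' := \{x \in M : x \in S \text{ and } x + a \in S\}$ and checks $S = S' + \{0,a\}$ directly from hypothesis~(c). In your framework this amounts to taking $L_C$ to be the set of \emph{all} left endpoints of edges in $G_C$ rather than those of a chosen edge cover; the ``no isolated vertex'' condition already gives $K_C = L_C \cup (L_C + 1)$ with no further selection needed. Your coset decomposition makes the underlying structure more transparent and produces a potentially smaller witness $T$, but at the cost of invoking $\gp(M)$, the infinite-order argument for $a$, and the base-point bookkeeping---all of which the paper's coordinate-free definition of $S'$ sidesteps in a couple of lines.
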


\begin{proof}
	If $a = 0$, then the three conditions hold trivially. Therefore we assume that $a \neq 0$.
	\smallskip
	
	(a) $\Rightarrow$ (b): This is clear.
	\smallskip
	
	(b) $\Rightarrow$ (c): Let us assume now that $\ldb 0, \ell \rdb a$ divides $S$ in $\Pfino(M)$ for some $\ell \in \nn$. Set $A := \ldb 0, \ell \rdb a$ and write $S = S' + A$ for some $S' \in \Pfino(M)$. Fix $x \in S$. Because $S = S' + A$, we can take $k \in \ldb 0, \ell \rdb$ such that $x \in S' + k a$. If $k \ge 1$, then  $x-a = (x - ka) + (k-1)a \in S' + A = S$. Otherwise, $k=0$ and so $x \in S'$, from which we obtain that $x + a \in S' + A = S$. Hence either $x-a \in S$ or $x+a \in S$.
	\smallskip
	
	(c) $\Rightarrow$ (a): Finally, assume that for each $x \in S$ either $x-a \in S$ or $x+a \in S$. We will prove that the equality $S = S'  + \{0,a\}$ holds provided that
	\[
		S' := \{x \in M : x, x + a \in S\}.
	\]
	To argue the inclusion $S \subseteq S' + \{0,a\}$, fix $x \in S$. By our assumption, either $x-a \in S$ or $x+a \in S$. If $x-a \in S$, then the fact that $x \in S$ guarantees that $x-a \in S'$, whence $x \in S' + a \subseteq S' + \{0,a\}$. Otherwise, $x+a \in S$, and this time the fact that $x \in S$ guarantees that $x \in S'$, which clearly implies that $x \in S' + \{0,a\}$. Hence $S \subseteq S' + \{0,a\}$. Finally, note that the inclusions $S' + 0 \subseteq S$ and $S' + a \subseteq S$ are direct consequences of the definition of $S'$, and so $S' + \{0,a\} \subseteq S$, which is the other desired inclusion.
\end{proof}

We are in a position to prove the main theorem of this section: a variation of~\cite[Theorem~4.11]{BG24} for primal elements.

\begin{theorem} \label{prop:primal elements in NPMs}
	For a numerical monoid $N$, the following statements hold.
	\begin{enumerate}
		\item $\ppp_{\emph{fin},0}(N)$ does not contain primal element.
		\smallskip
		
		\item $\ppp_{\emph{fin}}(N)$ contains only one primal element, $\{1\}$.
	\end{enumerate} 
\end{theorem}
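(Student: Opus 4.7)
For part (1), the plan is to exhibit, for each non-invertible $S \in \Pfino(N)$, explicit $B, C \in \Pfino(N)$ such that $S \mid B + C$ yet no decomposition $S = S_1 + S_2$ in $\Pfino(N)$ satisfies both $S_1 \mid B$ and $S_2 \mid C$. Let $m := \max S$. For the base case $|S| = 2$, so $S = \{0, m\}$, I take $B := \{0, m, 3m\}$ and $C := \{0, 2m, 3m\}$, both in $\Pfino(N)$ since $m \in N$. Condition (c) of Lemma~\ref{lem:lem_ext}, which characterizes divisibility by $\{0,m\}$ in $\Pfino(N)$ as the existence of $\pm m$-neighbors, yields $S \mid B + C = \{0, m, 2m, 3m, 4m, 5m, 6m\}$, while $3m \in B$ lacks such a neighbor in $B$ and $0 \in C$ lacks one in $C$, so $S \nmid B$ and $S \nmid C$. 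Since $\{0,m\}$ is necessarily an atom of $\Pfino(N)$ (any decomposition into two sets of size at least two produces a sumset of cardinality at least three), the only decompositions of $S$ are $(\{0\}, S)$ and $(S, \{0\})$, each of which demands $S \mid B$ or $S \mid C$.

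For $|S| \geq 3$, I adapt the strategy by setting $s_1 := \min(S \setminus \{0\})$, $B := \{0, s_1\}$, and choosing $C \in \Pfino(N)$ so that $B + C$ is divisible by $S$. A natural target, when feasible, is $\{0, s_1\} + C = 2S$, which makes $S \mid B + C$ trivial; one can then take $C$ as $2S \cap (2S - s_1)$ or a similar cover. Then $S \nmid B$ is immediate from $\max B = s_1 < m$, and $S \nmid C$ follows from a cardinality/maximum-value comparison. For every proper decomposition $S = S_1 + S_2$ in $\Pfino(N)$, the constraint $\max S_1 \leq \max B = s_1$ forces $S_1 \subseteq \{0, s_1\}$, and the corresponding bound on $\max S_2$ relative to $C$ precludes $S_1 \mid B$ and $S_2 \mid C$ from holding simultaneously. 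When the combinatorial structure of $2S$ obstructs this $\{0, s_1\}$-cover (for instance, $S = \{0, 3, 5\}$), I replace $2S$ by $S + T$ for a tailored $T \in \Pfino(N)$ and rerun the same verification.

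For part (2), I leverage the direct-sum decomposition $\Pfin(\nn_0) = \big\{\{n\} : n \in \nn_0\big\} \oplus \Pfino(\nn_0)$ recalled in the excerpt. Any $P \in \Pfin(\nn_0)$ writes uniquely as $P = \{n\} + S$ with $n = \min P$ and $S = P - n \in \Pfino(\nn_0)$. If $S \neq \{0\}$, the witnesses from part (1) for $S$, translated by $\{n\}$, yield $B', C' \in \Pfin(\nn_0)$ breaking primality of $P$. The remaining case, $P = \{n\}$ a singleton, requires a direct analysis: $\{1\}$ is verified primal (indeed prime, as the image of the unique prime in $\nn_0$), and all other singletons $\{n\}$ are shown non-primal via an explicit construction using non-singleton $B, C$. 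For $N \neq \nn_0$, $\{1\} \notin \Pfin(N)$, and an analogous construction handles every $P \in \Pfin(N)$, ultimately yielding that $\Pfin(N)$ has no primal.

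I anticipate the main obstacle to be the $|S| \geq 3$ case of part (1): the construction of $C$ must be adapted to the combinatorial structure of $S$, and every non-trivial decomposition $S = S_1 + S_2$ in $\Pfino(N)$ must be ruled out, which is sensitive to the shape of $2S$ (and whether $S$ itself is atomic in $\Pfino(N)$).
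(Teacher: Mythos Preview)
Your $|S|=2$ case in part~(1) is clean and complete: the explicit witnesses $B=\{0,m,3m\}$ and $C=\{0,2m,3m\}$ work, and the atomicity of $\{0,m\}$ reduces the check to the two trivial decompositions. This is a different, more direct route than the paper's for two-element sets.

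The genuine gap is the $|S|\ge 3$ case. Your plan requires, for \emph{every} such $S$, an explicit pair $(B,C)$ with $S\mid B+C$ and such that \emph{no} decomposition $S=S_1+S_2$ in $\Pfino(N)$ satisfies $S_1\mid B$ and $S_2\mid C$. You take $B=\{0,s_1\}$ and aim for $B+C=2S$, but---as you yourself note with $S=\{0,3,5\}$---$\{0,s_1\}$ need not divide $2S$, so $C$ does not exist in general. The fallback ``replace $2S$ by $S+T$ for a tailored $T$'' is not a proof; you would need a uniform construction of $T$ (and a uniform verification covering all decompositions of $S$, of which there may be many when $S$ is not an atom). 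As written, the argument handles no $S$ with $|S|\ge 3$.

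The paper avoids this case analysis entirely by arguing the other direction: assume $P$ is primal and apply primality to the single identity
\[
P + \ldb 0,\max P\rdb = \ldb 0,2\max P\rdb = \{0,\max P\} + \ldb 0,\max P\rdb.
\]
Because $\{0,\max P\}$ has only two divisors in $\Pfino(\nn_0)$, primality forces either $P=\{0,\max P\}$ or $P\mid \ldb 0,\max P\rdb$ (hence $P$ is an interval). Each of these two structured cases is then dispatched by a short descent/contradiction using further interval identities. The point is that intervals collapse sumsets, so one interval identity already pins $P$ down to one of two shapes---no need to tailor witnesses to the combinatorics of an arbitrary $S$. For $N\neq\nn_0$ the same idea is run with intervals shifted past the Frobenius number. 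Your part~(2) sketch via the direct-sum decomposition is reasonable in outline, but it inherits the gap from part~(1) and leaves the non-primality of singletons $\{n\}$ with $n\ge 2$ unproved; the paper again handles this with a single interval identity rather than ad hoc constructions.
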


\begin{proof}
	Let $N$ be a numerical monoid.
	\smallskip
	
	(1) Set $\ppp := \Pfino(N)$. In order to argue this part, we first assume that $N = \nn_0$. Suppose, for the sake of a contradiction, that $\ppp$ contains primal elements. Let $P$ be a primal element of $\ppp$. As $P$ is a primal, it cannot be an atom as, otherwise, it would be a prime, but $\ppp$ does not contain any primes. As $\{0,1\}$ is an atom of $\ppp$, we obtain that $P \neq \{0,1\}$. Now observe that
	\[
		P + \ldb 0, \max P \rdb = \ldb 0,2 \max P \rdb = \{0,\max P \} + \ldb 0, \max P \rdb.
	\]
	Since $P$ is primal, we can write $P = B + C$ for some $B,C \in \ppp$ such that $B \mid_\ppp \{0, \max P \}$ and $C \mid_\ppp \ldb 0, \max P \rdb$. Since the only divisors of $\{0,\max P\}$ in $\ppp$ are $\{0\}$ and $\{0, \max P\}$, either $B = \{0\}$ or $B = \{0, \max P\}$. We split the rest of our argument into two cases, obtaining a contradiction in each of them.
	\smallskip
	
	\textsc{Case 1:} $B = \{ 0, \max P\}$. This implies that $P = \{ 0, \max P \}$. Because $P \neq \{0,1\}$, we obtain that $\max P > 1$. Therefore
	\[
		P + \ldb 0, \max P \rdb = \ldb 0, 2 \max P \rdb = \{0,1\} + \ldb 0, 2 \max P -1 \rdb.
	\]
	Thus, from the fact that $\{0,1\} \nmid_\ppp P$, we obtain that $P \mid_\ppp \ldb 0, 2 \max P - 1 \rdb$. As a consequence, $P \mid_\ppp \{0,1\} + \ldb 0,2 \max P - 2 \rdb$, and so $P \mid_\ppp \ldb 0,2 \max P - 2 \rdb$. By descent, we eventually reach a contradiction. 
	\smallskip

	\textsc{Case 2:} $B= \{0\}$. This implies that $P = \ldb 0, \max P \rdb$. Suppose $\max P > 1$, and note that
	\[
		P + \{0, \max P - 1\} = \ldb 0, 2 \max P - 1 \rdb = \{0, \max P\} + \ldb 0, \max P - 1 \rdb.
	\]
	Because $P \nmid_\ppp \ldb 0, \max P - 1 \rdb$, it follows that $\{0, \max P\} \mid_\ppp P$, which in turn implies $P = \{ 0,\max P \}$, a contradiction.
	\smallskip
	
	Thus, we have proved that $\ppp$ contains no primal element when $N = \nn_0$. Now suppose that $N$ is a numerical monoid different from $\nn_0$ and, as we did before, take $P$ be a primal element of $\ppp$. Now fix $m \in \nn$ large enough so that $\zz_{\ge m} \subseteq N$ and $m > \max P$. Let us argue the following claim.
	\smallskip
	
	\noindent \textsc{Claim.} $P$ is a discrete interval.
	\smallskip
	
	\noindent \textsc{Proof of Claim.} Assume, towards a contradiction, that $P$ is not a discrete interval. Then, when sorted, $P$ contains a gap of size $b$, where $b \ge 2$. Observe that
	\[
		\{0, b-1\} + (\ldb m,2m \rdb \setminus \{m+b\}) = \ldb m,2m + b - 1 \rdb = P + \ldb m, 2m + b - 1 - \max P \rdb.
	\]
	Thus, $\{0, b - 1\}$ shares a divisor with $P$. This shared divisor cannot be $\{0, b-1\}$ because this means that $P$ has no gap of size $b$. Therefore $P$ must divide $\ldb m, 2m \rdb \setminus \{m+b\}$. However, this is not possible because any multiple $S$ of $P$ satisfies condition (c) in Lemma~\ref{lem:divisibility by a simple atom}. Thus, the claim is established.
	\smallskip
	
	Since $P$ is a discrete interval, $1 \in P$, which implies that $1 \in N$. However, this contradicts that $N = \nn_0$. Hence $\Pfino(N)$ does not contain primal elements.
	\smallskip
	
	(2) Now set $\ppp := \ppp_{\text{fin}}(N)$. As in the previous case, let us first assume that $N = \nn_0$. To prove that the only primal element of $\ppp$ is $\{1\}$, fix a primal element $P$ of $\ppp$. Since
	\[
		P + \ldb 0, \max P \rdb = \ldb \min P, 2 \max P \rdb = \{0, \max P \} + \ldb \min P, \max P\rdb,
	\]
	we see that $P$ shares a divisor with $\{0, \max P \}$, which implies that $0 \in P$. Then it follows from the previous part that $P = \{1\}$, which we know that is prime and, therefore, primal. Hence the only primal element of $\ppp_{\text{fin}}(\nn_0)$ is $\{1\}$.
	
	Finally, assume that $N$ is a numerical monoid different from $\nn_0$, and let $P$ be a primal element of~$\ppp$. We assume that $0 \notin P$. Take $m \in \nn$ large enough so that $\zz_{\ge m} \subseteq N$. Then
	\[
		P + \ldb 2m - \min P, 4m - \max P \rdb = \ldb 2m, 4m \rdb = \{0, m\} + \ldb 2m,3m \rdb.
	\]
	Therefore we obtain that $P$ shares a divisor with $\{0,m\}$. However, this implies $0 \in P$, a contradiction. Hence we can conclude that the only primal element of $\Pfin(N)$ is $\{1\}$.
\end{proof}

\bigskip
\section{A Partition on the Set of Atoms}
\label{sec:a 2-dim graded partition of atoms}

The study of atomic density in the setting of (restricted) numerical power monoids was also initiated by Bienvenu and Geroldinger in~\cite[Section~6]{BG24}. They study, given a numerical monoid $N$, the density of the sets of atoms of $\Pfino(N)$ and $\Pfin(N)$ by considering, for each $n \in \nn$, the quotient $q_n$ between the number of atoms with maximum $n$ and the total number of elements with maximum $n$. For restricted numerical monoids, they found that $\lim_{n \to \infty} q_n = 1$ and, although this result was previously established by Shitov~\cite{yS14} in the language of Boolean polynomials, they improved upon the decay rate of the number of non-atoms. 

Motivated by both the work of Shitov and the recent work of Bienvenu and Geroldinger, we proceed to further investigate the atomic density of the prototypical restricted numerical monoid $\Pfino(\nn_0)$. While we do not consider the whole class of (restricted) numerical power monoids, we propose a more sensitive method to investigate the atomic density of $\Pfino(\nn_0)$, which was kindly suggested by Geroldinger and Tringali as part of a list of related problems and questions to be studied in CrowdMath 2023. For each $n \in \nn$, we consider the sets
\[	
	\ppp_n := \{S \in \Pfino(\nn_0) : \max S \le n\} \quad \text{and} \quad \aaa_n := \{A \in \mathcal{A}(\Pfino(\nn_0)) : \max A \le n\},
\]
and then we partition $\aaa_n$ into blocks by size, introducing the sequence $(\alpha_{n,k})_{n,k \ge 1}$, whose $(n,k)$-th term is the number of atoms of $\Pfino(\nn_0)$ of size $k$ with maximum at most $n$.

\subsection{The Sequence $(\alpha_{n,k})_{n,k \ge 1}$}

For each $n \in \nn$, we aim to partition the set $\aaa_n$ by the sizes of its atoms: so we define the set
\begin{equation} \label{eq:A_{n,k}}
	\aaa_{n,k} := \big\{ A \in \mathcal{A}(\Pfino(\nn_0)) : \max A \le n \text{ and } |A| = k \big\}
\end{equation}
for every $k \in \nn_0$. Clearly, $\mathcal{A}_{n,0}$ is empty, and we define it only for convenience of notation. The sequence whose terms are the sizes of the sets $\aaa_{n,k}$ will be our central object of study for the rest of this section: for each $n \in \nn$, set
\begin{equation} \label{eq:alpha_{n,k}}
	\alpha_{n,k} := |\aaa_{n,k}| \quad \text{and} \quad \alpha_n := |\aaa_n|.
\end{equation}
for every $k \in \nn_0$. As a special case of~\cite[Theorem~6.1.1]{BG24}, we find that $\lim_{n \to \infty} \frac{|\aaa_n|}{|\ppp_n|} = 1$.  As a consequence, we obtain the following asymptotic formula (previously established by Shitov in~\cite{yS14}):
\begin{equation}\label{eq:Shitov's asymptotic for alpha(n)}
	\alpha_n = 2^n(1 - o(1)).
\end{equation}

Observe that $\alpha_{n,1} = 1$ as the only singleton in $\Pfino(\nn_0)$ is the identity element. We can also observe that $\alpha_{n,2} = n$ because $\{0,a\}$ is an atom of $\Pfino(\nn_0)$ for every $a \in \ldb 1,n \rdb$. Before finding an explicit formula for $\alpha_{n,3}$ and $\alpha_{n,4}$, we need the following lemma.
\smallskip

\begin{lemma} \label{lem:size of the addition}
	For nonzero $A, B \in \mathcal{P}_{\emph{fin}, 0}(\mathbb{N}_0)$, the following statements hold.
	\begin{enumerate}
		\item $|A+B| \ge |A| + |B| - 1$.
		\smallskip
		
		\item In addition, the following conditions are equivalent.
		\begin{enumerate}
			\item $|A+B| = |A| + |B| - 1$.
			\smallskip
			
			\item There exists $c \in \mathbb{N}$ such that $A = c \ldb 0, |A|-1 \rdb$ and $B = c \ldb 0, |B|-1 \rdb$, in which case, $A+B = c\ldb 0, |A|+|B|-2 \rdb$.
		\end{enumerate}
	\end{enumerate}
\end{lemma}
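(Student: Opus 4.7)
The plan is to prove (1) by the classical ``long chain'' construction: after sorting $A = \{a_0 < a_1 < \cdots < a_{m-1}\}$ and $B = \{b_0 < b_1 < \cdots < b_{n-1}\}$ (where $m = |A|$, $n = |B|$, and $a_0 = b_0 = 0$), the elements
\[
    a_0 + b_0 < a_1 + b_0 < \cdots < a_{m-1} + b_0 < a_{m-1} + b_1 < \cdots < a_{m-1} + b_{n-1}
\]
form a strictly increasing sequence of $m + n - 1$ distinct elements of $A + B$, which settles~(1). For~(2), the implication (b) $\Rightarrow$ (a) is immediate from $c\ldb 0, m-1\rdb + c\ldb 0, n-1\rdb = c\ldb 0, m+n-2\rdb$, so the real content lies in (a) $\Rightarrow$ (b), which I would prove by induction on $m = |A|$. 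The ``nonzero'' hypothesis on $A$ and $B$ guarantees $m, n \ge 2$.

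For the base case $m = 2$, write $A = \{0, c\}$ with $c \in \nn$, so that $A + B = B \cup (c + B)$. Applying inclusion--exclusion to $|A + B| = |B| + 1$ yields $|B \cap (c + B)| = |B| - 1$. Since $0 \in B \setminus (c+B)$ and $c + \max B \in (c+B) \setminus B$, each of these set differences is forced to be a singleton, giving
\[
    \{b_1, b_2, \ldots, b_{n-1}\} = \{c, c + b_1, \ldots, c + b_{n-2}\}.
\]
Matching the sorted elements on both sides produces $b_i = i \cdot c$ for $1 \le i \le n-1$, whence $B = c\ldb 0, n-1\rdb$ and $A = c\ldb 0, 1\rdb$.

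For the inductive step ($m \ge 3$), let $A' := A \setminus \{a_{m-1}\}$. Because $a_{m-1} + b_{n-1} = \max(A+B) > a_{m-2} + b_{n-1} = \max(A' + B)$, we have $|A' + B| \le |A+B| - 1 = (m-1) + n - 1$, which combined with~(1) yields $|A' + B| = |A'| + |B| - 1$. Applying the inductive hypothesis to $(A', B)$ produces some $c \in \nn$ with $A' = c\ldb 0, m-2\rdb$ and $B = c\ldb 0, n-1\rdb$, and it only remains to locate $a_{m-1}$. Writing $A + B = c\ldb 0, m+n-3\rdb \cup (a_{m-1} + c\ldb 0, n-1\rdb)$, the first set has $m+n-2$ elements, so the second must contribute exactly one new element. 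A residue analysis modulo $c$ (using $n \ge 2$) forces $a_{m-1}$ to be a multiple of $c$; then counting how many of the multiples $a_{m-1}, a_{m-1}+c, \ldots, a_{m-1}+(n-1)c$ fall outside $c\ldb 0, m+n-3\rdb$ pins down $a_{m-1} = (m-1)c$ and hence $A = c\ldb 0, m-1\rdb$.

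The main obstacle is this final pinning-down step: the inductive hypothesis constrains $A'$ and $B$ to share a common ratio $c$, but it does not a priori control where $a_{m-1}$ sits in relation to this progression. The residue argument is the key ingredient, and it critically uses $|B| \ge 2$---precisely where the ``nonzero'' hypothesis on $B$ becomes essential, since if $|B| = 1$ had been allowed, a non-AP element $a_{m-1}$ outside $c\nn_0$ could contribute the single new element without contradiction.
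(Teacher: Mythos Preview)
Your argument is correct, but it takes a different route from the paper for the implication (a) $\Rightarrow$ (b). The paper does not induct on $|A|$; instead, it observes that the chain $S$ constructed in part~(1) must coincide with $A+B$ when equality holds, and then locates each element $a_{k-1}+b_1 \in A+B=S$ in the gap $(a_{k-1},a_{k+1})\cap S = \{a_k\}$ to conclude $a_k-a_{k-1}=b_1$ for all $k$ by a direct descent. A symmetric run of the argument (with the roles of $A$ and $B$ interchanged) gives $b_k-b_{k-1}=a_1$, and $a_1=b_1$ drops out automatically. Your induction, by contrast, peels off $\max A$ and feeds $(A',B)$ back into the hypothesis; the price is the extra residue-and-counting step to nail down $a_{m-1}=(m-1)c$, though this step nicely isolates exactly where $|B|\ge 2$ is used. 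The paper's argument is shorter and treats $A,B$ symmetrically, while yours has a cleaner base case and makes the role of the nonzero hypothesis on $B$ more transparent.
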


\begin{proof}
	(1) Let $A$ and $B$ be nonzero elements of $\mathcal{P}_{\text{fin}, 0}(\mathbb{N}_0)$, and set $n := |A|$ and $m := |B|$. Now let $a_1, \dots, a_{n-1}$ and $b_1, \dots, b_{m-1}$ be the positive integers with $a_1 < a_2 < \dots < a_{n-1}$ and $b_1 < b_2 < \dots < b_{m-1}$ such that $A := \{0, a_1, a_2, \dots, a_{n-1}\}$ and $B := \{0, b_1, b_2, \dots, b_{m-1}\}$. Now observe that
	\[
	S := \{0\} \cup \{a_j, a_{n-1} + b_k : j \in \ldb 1,n-1 \rdb \text{ and } k \in \ldb 1, m-1 \rdb \}
	\]
	is a subset of $A+B$. Since $0 < a_1 < a_2 < \dots < a_{n-1} < a_{n-1} + b_1 < a_{n-1} + b_2 < \dots < a_{n-1} + b_{m-1}$, it follows that $|S| = n+m-1$, which implies that $|A+B| \ge n + m -1 = |A| + |B| - 1$.
	\smallskip
	
	(2) To argue that (a) implies (b), assume that $|A+B| = |A| + |B| - 1$. Let $A$, $B$, and $S$ be as in the previous part, and set $a_0 = b_0 = 0$. We claim that $A = b_1 \ldb 0, |A| - 1 \rdb$. If $|A| = 2$, then our claim follows immediately. Thus, we assume that $|A| \ge 3$. Because $S \subseteq A + B$ and $|S| = |A| + |B| - 1 = |A+B|$, we see that $S = A+B$. Observe that $a_{n-2} + b_1$ is an element of~$S$ satisfying that $a_{n-2} < a_{n-2} + b_1 < a_{n-1} + b_1$, so the fact that $\ldb a_{n-2} + 1, a_{n-1} + b_1 - 1 \rdb \cap S = \{a_{n-1}\}$ guarantees that $a_{n-1} = a_{n-2} + b_1$. Now suppose that $a_k = a_{k-1} + b_1$ for some $k \in \ldb 2,n-1 \rdb$. Since $a_{k-2} + b_1 \in S$ and $a_{k-2} < a_{k-2} + b_1 < a_{k-1} + b_1 = a_k$, it follows from $\ldb a_{k-2}+1, a_k - 1 \rdb \cap S = \{a_{k-1}\}$ that $a_{k-1} = a_{k-2} + b_1$. Hence we conclude that $a_k = a_{k-1} + b_1$ for every $k \in \ldb 1,n-1\rdb$. This implies that $a_k = k b_1$ for each $k \in \ldb 0,n-1 \rdb$, and so $A = b_1\ldb 0, |A| - 1 \rdb$. In particular, we see that $a_1 = b_1$. In a similar manner, we can argue that $B = a_1 \ldb 0, |B| - 1 \rdb$, and then we just need to set $c = a_1$.
	
	Arguing that (b) implies (a) is immediate as if $A = c \ldb 0, |A|-1 \rdb$ and $B = c \ldb 0, |B|-1 \rdb$ for some $c \in \mathbb{N}$, the equality $A+B = c\ldb 0, |A|+|B|-2 \rdb$ clearly holds.
\end{proof}

We proceed to find explicit formulas for $\alpha_{n,3}$ and $\alpha_{n,4}$.

\begin{prop}
	For any $n \in \nn$, the following equalities hold:
	\begin{equation*}
		\alpha_{n,3} = \binom{n}2 - \bigg\lfloor \frac{n}2 \bigg\rfloor \quad \text{ and } \quad \alpha_{n,4} = \binom{n}{3}-\frac{1}{2}\binom{n}{2}+\frac{1}{2} \bigg\lfloor \frac{n}{2} \bigg\rfloor.
	\end{equation*}
\end{prop}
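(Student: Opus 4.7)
The plan is to compute the two quantities separately by counting the total number of size-$k$ subsets of $\ldb 0,n \rdb$ containing $0$ and then subtracting the number of non-atoms of that size, using Lemma~\ref{lem:size of the addition} to classify the non-atoms.

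For $\alpha_{n,3}$: the total number of $3$-element sets of the form $\{0,a,b\}$ with $0 < a < b \le n$ is $\binom{n}{2}$. If such a set $A$ is not an atom, then $A = B + C$ for some $B, C \in \Pfino(\nn_0) \setminus \{\{0\}\}$, so $|B|, |C| \ge 2$ and part~(1) of the lemma yields $3 = |A| \ge |B|+|C|-1 \ge 3$. Equality forces $|B|=|C|=2$ and then part~(2) of the lemma gives $B = C = c\{0,1\}$ for some $c \in \nn$, so $A = c\ldb 0,2\rdb = \{0,c,2c\}$. These sets satisfy $\max A = 2c \le n$, so there are exactly $\lfloor n/2 \rfloor$ of them, yielding $\alpha_{n,3} = \binom{n}{2} - \lfloor n/2 \rfloor$.

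For $\alpha_{n,4}$: the total number of such $4$-sets is $\binom{n}{3}$. Again, non-atomicity forces $A = B + C$ with $|B|, |C| \ge 2$, and $|B|+|C| \le 5$ by the lemma. The case $|B|+|C|=5$ is subsumed in the case $|B|+|C|=4$, because Lemma~\ref{lem:size of the addition}(2) forces $A = c\ldb 0,3\rdb = \{0,c,2c,3c\}$, which is of the form $\{0,b_1,c_1,b_1+c_1\}$ with $(b_1,c_1) = (c,2c)$. So the non-atoms of size $4$ are precisely the sets of the form $\{0,b_1,c_1,b_1+c_1\}$, where $B = \{0,b_1\}$ and $C = \{0,c_1\}$ with $0 < b_1 < c_1$ (the case $b_1=c_1$ gives $|A| = 3$) and $b_1+c_1 \le n$.

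The key point, which is the main (but mild) obstacle, is to show that this parametrization is injective, so that no inclusion-exclusion is required. Given a non-atom $A = \{0,x,y,z\}$ with $x < y < z$, any decomposition $A = \{0,b_1\}+\{0,c_1\}$ with $b_1<c_1$ must satisfy $b_1+c_1 = \max A = z$ and $\{b_1,c_1\} \subseteq A$, so $\{b_1,c_1\} = \{x,y\}$. Hence the pair $(b_1,c_1)$ is determined uniquely by $A$. Counting pairs with $0 < b_1 < c_1$ and $b_1+c_1 \le n$ gives
\[
\sum_{s=3}^{n} \left\lfloor \frac{s-1}{2} \right\rfloor = \left\lfloor \frac{(n-1)^2}{4} \right\rfloor,
\]
so $\alpha_{n,4} = \binom{n}{3} - \lfloor (n-1)^2/4 \rfloor$. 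A routine parity check (treating $n$ even and $n$ odd separately) shows $\lfloor (n-1)^2/4\rfloor = \tfrac12\binom{n}{2} - \tfrac12 \lfloor n/2 \rfloor$, which yields the claimed formula.
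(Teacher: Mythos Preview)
Your argument is correct and follows essentially the same route as the paper: count all size-$k$ subsets containing $0$, then use Lemma~\ref{lem:size of the addition} to classify and count the non-atoms. Your explicit verification that the map $(b_1,c_1)\mapsto\{0,b_1,c_1,b_1+c_1\}$ is injective is a welcome addition (the paper passes over this point), and your closed form $\lfloor (n-1)^2/4\rfloor$ for the number of non-atoms agrees with the paper's $\tfrac14\bigl(n^2-2n+(n\bmod 2)\bigr)$.
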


\begin{proof}
	To establish the formula for $\alpha_{n,3}$, we will count the number of non-atoms $A \in \ppp_n$ of $\Pfino(\nn_0)$ having size $3$. By definition, $A$ can be written as $B + C$, for some $B,C \in \Pfino(\nn_0)$ with $\min(|B|, |C|) \geq 2$. On the other hand, it follows from Lemma~\ref{lem:size of the addition} that $3 \ge |B| + |C| - 1$, so $|B| + |C| \leq 4$. Thus, $|B| = |C| = 2$. After writing $B = \{0, b\}$ and $C = \{0, c\}$ for some $b,c \in \nn$, we see that $A = \{0, b, c, b+c\}$. Since $|A| = 3$, the equality $b = c$ holds, so $A = \{0, b, 2b\}$. Hence $\left\lfloor \frac{n}{2} \right\rfloor$ non-atoms in $\Pfino(\nn_0)$ having size~$3$. Therefore
	\[
		\alpha_{n,3} = \binom{n}2 - \bigg\lfloor \frac{n}2 \bigg\rfloor.
	\]
	For $\alpha_{n,4}$, we will count the number of non-atoms $A$ in $\ppp_n$ of size $4$. By definition, $A$ can be written as $B + C$ for some $B,C \in \Pfino(\nn_0)$ such that $\min(|B|, |C|) \ge 2$. It follows from Lemma~\ref{lem:size of the addition} that $|B| + |C| \leq 5$. We can assume, without loss of generality, $|B| \leq |C|$. We consider the following two cases.
	\smallskip
	
	\textsc{Case 1:} $|B| = |C| = 2$. After writing $B = \{0, b\}$ and $C = \{0, c\}$ for some $b,c \in \nn$, we see that $A = \{0, b, c, b+c\}$. Since $A \subseteq \ldb 0,n \rdb$ and $|A| = 4$, it follows that $b \neq c$ and $b+c \leq n$. As a consequence, there are $\frac14( n^2-2n+(n \text{ mod } 2) )$ non-atoms $A$ in this case, where $n \text{ mod } 2$ denotes the remainder of $n$ upon division by 2.
	\smallskip
	
	\textsc{Case 2:} $|B| = 2$ and $|C| = 3$. Since $|B+C| = |B|+|C|-1$, Lemma~\ref{lem:size of the addition} implies that $B=\{0,c\}$ and $C=\{0,c,2c\}$ for some $c \in \nn$, so $A = \{0,c,2c,3c\}$. However, sets $A$ of this form have been already been counted in Case 1, so this case does not contribute any more non-atoms. 
	
	As a consequence, we obtain that
	\begin{equation*}
		\alpha_{n,4} = \binom{n}{3}-\frac{n^2-2n+(n \text{ mod } 2)}{4} = \binom{n}{3}-\frac{1}{2}\binom{n}{2}+\frac{1}{2} \bigg\lfloor \frac{n}{2} \bigg\rfloor.
	\end{equation*}
\end{proof}

\medskip
\subsection{Bounding the Sequence $(\alpha_{n,k})_{n,k \ge 1}$}

Our next goal is to establish (asymptotic) bounds for the sequence of sizes $(\alpha_{n,k})_{n,k \ge 1}$. We start by establishing an explicit upper bound.

\begin{theorem}
	For any $n,k \in \nn$ such that $k \in \ldb 1,n+1 \rdb$, the following inequality holds:
	\begin{equation}
		\alpha_{n,k} \le \binom{n}{k-1} - \binom{\lfloor n/2 \rfloor -1}{\lfloor k/2 \rfloor - 1}. 
	\end{equation}
\end{theorem}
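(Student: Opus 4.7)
The plan is to bound the number of non-atoms from below: since the collection of subsets of $\ldb 0, n \rdb$ of size $k$ containing $0$ has cardinality $\binom{n}{k-1}$, it suffices to exhibit an injection from the set
\[
	\sss_{n,k} := \{S \subseteq \ldb 1, \lfloor n/2 \rfloor - 1 \rdb : |S| = \lfloor k/2 \rfloor - 1\}
\]
into the non-atoms of $\ppp_n$ of size $k$. Each $S \in \sss_{n,k}$ will be sent to a set of the form $A_S := B_S + \{0, c\}$, where the summand $B_S$ and the shift $c$ depend on the parity of $k$; in every case both $B_S$ and $\{0,c\}$ will have at least two elements, so $A_S$ is non-atomic by construction.

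For even $k$, I would take $B_S := \{0\} \cup S$ and $c := \lceil n/2 \rceil$. Since $\max B_S \le \lfloor n/2 \rfloor - 1 < \lceil n/2 \rceil$, the summands $B_S$ and $B_S + c$ are disjoint, so $|A_S| = 2|B_S| = k$ and $\max A_S \le n-1$. For odd $k$, I would take $B_S := \{0, \lfloor n/2 \rfloor\} \cup S$ and $c := \lfloor n/2 \rfloor$. In this case $\lfloor n/2 \rfloor$ lies both in $B_S$ and (as $0 + \lfloor n/2 \rfloor$) in $B_S + c$; the equality $\max B_S = \lfloor n/2 \rfloor$ then rules out any other coincidence, so $|B_S \cap (B_S + c)| = 1$, which forces $|A_S| = 2|B_S| - 1 = k$ and $\max A_S = 2 \lfloor n/2 \rfloor \le n$.

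To close the argument I would verify that $S \mapsto A_S$ is injective by recovering $B_S$ from $A_S$ as the ``lower half'' $A_S \cap \ldb 0, \lfloor n/2 \rfloor - 1 \rdb$ (even $k$) or $A_S \cap \ldb 0, \lfloor n/2 \rfloor \rdb$ (odd $k$); in both situations, the construction places $B_S$ and $B_S + c$ into easily separable intervals, with the single overlap in the odd case located precisely at $\lfloor n/2 \rfloor$. Since $|\sss_{n,k}| = \binom{\lfloor n/2 \rfloor - 1}{\lfloor k/2 \rfloor - 1}$ in both parities, subtracting from $\binom{n}{k-1}$ yields the claimed bound. The delicate point is the odd $k$ case: a sumset of the form $B + \{0, c\}$ has size $2|B| - |B \cap (B+c)|$ and is therefore automatically even unless the overlap is a single element, so the main trick is to engineer that lone overlap by forcing both $0$ and $\lfloor n/2 \rfloor$ into $B_S$ and confining the remaining entries to $\ldb 1, \lfloor n/2 \rfloor - 1 \rdb$, which simultaneously produces an odd-sized non-atom and leaves exactly $\lfloor k/2 \rfloor - 1$ free coordinates, matching the target binomial.
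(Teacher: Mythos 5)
Your construction is essentially the paper's: both exhibit a family of non-atoms of the form $B + \{0,c\}$ with $c$ close to $n/2$ and the free part of $B$ confined to $\ldb 1, \lfloor n/2 \rfloor - 1\rdb$, and both count that family by $\binom{\lfloor n/2\rfloor-1}{\lfloor k/2\rfloor-1}$. The paper phrases the construction via residue classes modulo $\lfloor n/2\rfloor$, while you spell out an explicit injection $S \mapsto A_S$ with the shift $c$ chosen according to the parity of $k$; your bookkeeping is a bit cleaner, since the paper is loose about the three-element class $S_1$ when $n$ is odd and $k$ is even (taking \emph{all} its elements produces a set of the wrong size), whereas your choice $c = \lceil n/2 \rceil$ in the even-$k$ case sidesteps that.

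One caveat, which you share with the paper: when $k = 2$ your $B_S = \{0\}$ is a singleton, so $A_S = \{0,c\}$ is an atom, and your blanket claim that both summands have at least two elements fails. In fact $\alpha_{n,2} = n$ while the stated bound evaluates to $n - \binom{\lfloor n/2\rfloor - 1}{0} = n - 1$, so the inequality in the theorem is actually false at $k = 2$ and no proof can rescue it there; for $k \geq 3$ your argument goes through as written.
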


\begin{proof}
	To prove this upper bound on $\alpha_{n,k}$, we will give a lower bound on the number of non-atoms $A \in \ppp_n$ of $\Pfino(\nn_0)$ having size $k$. 
	
	Say that $A \in \ppp_n$ of size $k$ is good if it can be written as $B+C$, where $B$ is $\{0,b\}$ for some positive integer $b \le n$ and $C \in \ppp_n$ has size at least $2$. It is clear that every good set $A$ is a non-atom. For each $i \in \ldb 0,b-1 \rdb$, let $S_i$ denote the set of integers in $\ldb 1,n \rdb$ that are $i \mod b$. Then a set $A$ is good if and only if $0 \in A$ and for each $i \in \ldb 0,b-1 \rdb$, when $S_i$ is written in increasing order, any element of $A$ that is in $S_i$ is adjacent to at least one other element of $A$ that is in $S_i$.
	
	We will prove a lower bound on the number of good sets for $b=\lfloor n/2 \rfloor$. First, the sets $S_i$ are given as follows: if $n$ is even, then $S_0 = \{0,\frac{n}{2},n \}$ and $S_i = \{i, i+\frac{n}{2} \}$ for each $i \in \ldb 1, \frac{n}{2} -1 \rdb$. If $n$ is odd, then $S_0 = \{0, \frac{n-1}{2}, n-1 \}$, $S_1 = \{1, \frac{n+1}{2}, n \}$, and $S_i = \{i, i+\frac{n-1}{2} \}$ for each $i \in \ldb 2, \frac{n-3}{2} \rdb$.
	
	We split our argument into two cases based on the parity of $k$. 
	\smallskip
	
	\textsc{Case 1:} If $k$ is even, then the number of such good sets is at least $\binom{\lfloor n/2 \rfloor -1}{(k-2)/2}$ because if we choose $\frac{k-2}{2}$ sets among $S_1, \dots, S_{\lfloor n/2 \rfloor-1}$, then the set $A$ consisting of all elements in the chosen $S_i$'s and the smallest two elements in $S_0$ is good. 
	\smallskip
	
	\textsc{Case 2:} If $k$ is odd, then the number of such good sets is at least $\binom{\lfloor n/2 \rfloor -1}{(k-3)/2}$  because if we choose $\frac{k-3}{2}$ sets among $S_1, \dots, S_{\lfloor n/2 \rfloor-1}$, then the set $A$ consisting of all elements in the chosen $S_i$'s and all three elements in $S_0$ is good.
	\smallskip
	
	Combining the two cases, we see that the number of good sets for $b=\lfloor n/2 \rfloor$ is at least $\binom{\lfloor n/2 \rfloor -1}{\lfloor k/2 \rfloor - 1}$, so the number of non-atoms in $\ppp_n$ of $\Pfino(\nn_0)$ having size $k$ is at least $\binom{\lfloor n/2 \rfloor -1}{\lfloor k/2 \rfloor - 1}$. Hence, $\alpha_{n,k}$ is at least $\binom{n}{k-1} - \binom{\lfloor n/2 \rfloor -1}{\lfloor k/2 \rfloor - 1}$.
\end{proof}


\medskip
\subsection{A Strong Asymptotic on the Sequence $(\alpha_{n,k})_{n,k \ge 1}$}

Our next goal is obtaining a precise asymptotic on the sequence $(\alpha_{n,k})_{n,k \ge 1}$ for $0.111n<k<0.5n$, and we do so in this section by optimizing an argument given by Shitov in~\cite{yS14}. Before we begin, we need to establish an estimate of the binomial coefficients.

\begin{prop}\label{prop:asymptotics of binomial coefficients}
	If $k=\Omega(n)$, then
	\[
		\log \binom{n}{k}=(1+o(1))n\left(\frac{k}{n}\log \frac{n}{k}+\frac{n-k}{n}\log \frac{n}{n-k}\right).
	\]
	After letting $H(x) = -x\log x-(1-x) \log(1-x)$ be the entropy function, one obtains the following
	\[
		\log \binom{n}{k}=(1+o(1))n H(k/n).
	\]
\end{prop}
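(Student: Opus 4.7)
The plan is to derive the estimate directly from Stirling's approximation. I would begin with the identity $\log \binom{n}{k} = \log n! - \log k! - \log(n-k)!$ and apply $\log m! = m \log m - m + O(\log m)$ to each of the three factorials. The three linear contributions satisfy $-n + k + (n-k) = 0$ and therefore cancel, while each logarithmic error is at most $O(\log n)$ (since $k, n-k \le n$). This produces
\[
\log \binom{n}{k} = n\log n - k \log k - (n-k)\log(n-k) + O(\log n).
\]

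Next, I would recast the leading term in the form claimed by the proposition. Using the trivial identity $n \log n = k \log n + (n-k) \log n$ and regrouping yields
\[
n \log n - k \log k - (n-k)\log(n-k) = k \log\frac{n}{k} + (n-k)\log\frac{n}{n-k},
\]
which, after factoring out $n$, is precisely $n H(k/n)$ with $H$ the entropy function as defined in the statement. Hence $\log \binom{n}{k} = nH(k/n) + O(\log n)$.

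Finally, the hypothesis $k = \Omega(n)$ ensures (in the regime $\min(k,n-k) = \Omega(n)$ in which the statement is meaningful, which covers the subsequent application with $0.111\,n < k < 0.5\,n$) that $H(k/n) = \Theta(1)$, so $nH(k/n) = \Theta(n)$ strictly dominates the $O(\log n)$ error. Dividing through by $nH(k/n)$ converts the additive error into a multiplicative factor $1 + O\!\left((\log n)/n\right) = 1 + o(1)$, yielding the claimed asymptotic.

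I do not expect a serious obstacle: the whole argument is a routine use of Stirling's formula, and the only mild subtlety is verifying that the main term is strictly larger than the Stirling error, which is immediate once $k/n$ stays bounded away from both $0$ and $1$.
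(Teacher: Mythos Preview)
Your argument is correct and essentially identical to the paper's: both apply Stirling's approximation (you via $\log m! = m\log m - m + O(\log m)$, the paper via the multiplicative form before taking logarithms), identify the main term as $nH(k/n)$, and absorb the $O(\log n)$ remainder using $k=\Omega(n)$. Your explicit remark that one also needs $n-k=\Omega(n)$ for $H(k/n)=\Theta(1)$ is a welcome clarification that the paper leaves implicit.
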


\begin{proof}
	It follows from Stirling's approximation that
	\[
		\binom{n}{k}=\frac{n!}{k!(n-k)!}=(1+o(1))\frac{\sqrt{2\pi n}(n/e)^n}{\sqrt{2\pi k}(k/e)^k \sqrt{2\pi(n-k)}((n-k)/e)^{n-k}}.
	\]
	Hence
	\[
		\binom{n}{k}=(1+o(1))\sqrt{\frac{n}{2\pi k(n-k)}}\left(\frac{n}{k}\right)^k\left(\frac{n}{n-k}\right)^{n-k}.
	\]
	Now we can take logarithms to obtain the following equality.
	\[
		\log \binom{n}{k}=O(1)+\log \sqrt{\frac{n}{2\pi k(n-k)}}+k\log \frac{n}{k}+(n-k)\log \frac{n}{n-k}.
	\]
	Because $k=\Omega(n)$, the second term in this sum can be swept under the rug. This implies the desired result.
\end{proof}

We need one more estimate regarding binomial coefficients before we can apply our results to the sequence $(\alpha_{n,k})_{n,k \ge 1}$.

\begin{prop}\label{prop:asymptotics of binomial coefficient ratio}
	Let $0<c<1/2$ and $t=\frac{\varepsilon n}{\log n}$ for some constant $\varepsilon>0$. Then
	$$\frac{\binom{n}{cn-t}}{\binom{n}{cn}}\le \exp\left(-\Omega\left(\frac{n}{\log n}\right)\right).$$
\end{prop}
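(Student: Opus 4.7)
The plan is to write the ratio as a telescoping product and then show that each factor is bounded above by a constant strictly less than one. Using the standard identity $\binom{n}{k-1}/\binom{n}{k} = k/(n-k+1)$ and iterating it $t$ times, we obtain
\[
    \frac{\binom{n}{cn-t}}{\binom{n}{cn}} = \prod_{j=1}^{t} \frac{cn-j+1}{(1-c)n+j}.
\]
(For simplicity of notation we suppress the ceilings and floors on $cn$ and $t$, as they do not affect the asymptotics.)

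Next, I would uniformly estimate each factor. Since $c < 1/2$ is fixed, the number $\rho_0 := \frac{c}{1-c}$ satisfies $\rho_0 < 1$. Because $t = \varepsilon n/\log n = o(n)$, for any fixed small $\eta > 0$ and for $n$ large enough, every index $j \in \ldb 1,t \rdb$ satisfies
\[
    \frac{cn-j+1}{(1-c)n+j} \le \frac{cn}{(1-c)n - t} \le \frac{c}{1-c} + \eta =: \rho,
\]
where $\rho$ can be chosen independent of $n$ with $\rho < 1$ (shrink $\eta$ so that $\rho < 1$). Plugging this bound into the product above yields
\[
    \frac{\binom{n}{cn-t}}{\binom{n}{cn}} \le \rho^{t} = \exp\!\left(t \log \rho\right).
\]

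Finally, since $\log \rho$ is a negative constant and $t = \varepsilon n/\log n$, we conclude
\[
    \exp(t \log \rho) = \exp\!\left( - \frac{\varepsilon |\log \rho|}{\log n} \, n \right) = \exp\!\left(-\Omega(n/\log n)\right),
\]
which is the desired bound. The only place where care is needed is the uniform upper bound on each factor; this works precisely because $t = o(n)$, ensuring that even for $j$ close to $t$ the denominator $(1-c)n+j$ remains of order $(1-c)n$. The argument is otherwise routine, essentially reducing the problem to summing $\Omega(n/\log n)$ negative constants.
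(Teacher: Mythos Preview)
Your proof is correct and takes a genuinely different route from the paper's. The paper expands both binomial coefficients via Stirling's approximation, takes logarithms, and then uses the first-order estimates $\log(1+1/x)=1/x+o(x^{-1})$ and $\log(1-1/x)=-1/x-o(x^{-1})$ to show that $\log\bigl(\binom{n}{cn-t}/\binom{n}{cn}\bigr)=-\Omega(n/\log n)$. You instead use the elementary telescoping identity $\binom{n}{k-1}/\binom{n}{k}=k/(n-k+1)$ to write the ratio as a product of $t$ factors, each of which you bound uniformly by a constant $\rho<1$ using only that $c<1/2$ and $t=o(n)$. This is cleaner and avoids Stirling entirely; indeed, since $j\ge 1$ already gives $(1-c)n+j\ge(1-c)n$, you could have taken $\rho=c/(1-c)$ directly without the auxiliary $\eta$. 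The paper's approach, on the other hand, tracks the leading-order constants more explicitly and fits the style of the surrounding asymptotic arguments using the entropy function.
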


\begin{proof}
	By Stirling's approximation,
	\[
		\frac{\binom{n}{cn-t}}{\binom{n}{cn}}=\frac{(cn)!(n-cn)!}{(cn-t)!(n-cn+t)!}=(1+o(1))\sqrt{\frac{cn(n-cn)}{(cn-t)(n-cn+t)}}\frac{(cn/e)^{cn}((n-cn)/e)^{n-cn}}{((cn-t)/e)^{cn-t}((n-cn+t)/e)^{n-cn+t}}.
	\]
	Hence
	\[
		\frac{\binom{n}{cn-t}}{\binom{n}{cn}}=(1+o(1))\sqrt{\frac{cn(n-cn)}{(cn-t)(n-cn+t)}}\left(\frac{cn}{cn-t}\right)^{cn-t}\left(\frac{n-cn}{n-cn+t}\right)^{n-cn}\left(\frac{cn}{n-cn+t}\right)^t.
	\]
	Therefore,
	\[
		\log \frac{\binom{n}{cn-t}}{\binom{n}{cn}}=O(1)-(cn-t)\log\left(1-\frac{t}{cn}\right)-(n-cn)\log\left(1+\frac{t}{n-cn}\right)-t\log\left(\frac{n-cn+t}{cn}\right).
	\]
	Because of the estimates $\log(1+1/x)=1/x+o(x^{-1})$ and $\log(1-1/x)=-1/x-o(x^{-1})$ for $x>1$, we see that
	\[
		\log \frac{\binom{n}{cn-t}}{\binom{n}{cn}}=o\left(\frac{n}{\log n}\right)+\frac{(cn-t)t}{cn}-t-t\log\left(\frac{n-cn+t}{cn}\right)=-\Omega\left(\frac{n}{\log n}\right),
	\]
	as desired.
\end{proof}

We are now prepared to establish the following asymptotic lower bound.

\begin{theorem} \label{thm:asymptotic result for alpha_{n,k} based on Shitov's work}
	For $n,k \in \nn$ such that $0.111n < k < 0.5n$, the following formula holds:
	\begin{equation*}
		\alpha_{n,k} \ge \binom{n}{k-1} \left(1 - \exp\left(-\Omega\left(\frac{n}{\log n}\right)\right)\right).
	\end{equation*}
\end{theorem}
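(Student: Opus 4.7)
The plan is to bound $|\mathcal{N}_{n,k}|$, where $\mathcal{N}_{n,k}$ denotes the set of non-atoms in $\ppp_n$ of size $k$. Since there are exactly $\binom{n}{k-1}$ subsets of $\ldb 0, n\rdb$ of size $k$ containing $0$, we have $\alpha_{n,k} = \binom{n}{k-1} - |\mathcal{N}_{n,k}|$, and the theorem reduces to proving
\[
|\mathcal{N}_{n,k}| \le \binom{n}{k-1}\exp\!\left(-\Omega\!\left(\frac{n}{\log n}\right)\right).
\]

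Every $A \in \mathcal{N}_{n,k}$ decomposes (not necessarily uniquely) as $A = B + C$ with $|B|, |C| \ge 2$, $0 \in B \cap C$, and without loss of generality $|B| \le |C|$. By Lemma~\ref{lem:size of the addition}, $|B| + |C| \le k + 1$, so $|B| \le (k+1)/2$. A union bound over such decompositions (which may overcount each $A$) gives
\[
|\mathcal{N}_{n,k}| \le \sum_{s=2}^{\lfloor (k+1)/2 \rfloor}\ \sum_{\substack{B \ni 0\\ |B|=s,\ \max B \le n}} N(B),
\]
where $N(B)$ counts those $C \in \Pfino(\nn_0)$ with $|C| \ge 2$, $|B+C| = k$, and $\max(B+C) \le n$.

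The technical crux is bounding $N(B)$: the naive estimate $\binom{n}{|C|-1}$ is too weak, because it ignores the coincidence condition $|B+C| = k$ (strictly less than $|B|\cdot|C|$ outside the degenerate case of Lemma~\ref{lem:size of the addition}), which forces $C$ to have strong additive structure. Adapting Shitov's chain argument from~\cite{yS14}, I would split on $|B|$. When $|B|=2$, writing $B = \{0,b\}$, Lemma~\ref{lem:divisibility by a simple atom} implies that on each chain $\{i, i+b, i+2b, \ldots\} \cap \ldb 0, n \rdb$ the elements of $A$ have no isolated points, forming runs of length $\ge 2$; counting such configurations with $|A| = k$ via convolutions of Fibonacci-like quantities yields $N(B) \le n^{O(1)}\binom{n}{k-1-t}$ for some $t = \Omega(n/\log n)$. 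When $|B| \ge 3$, not every non-atom is divisible by a two-element set, and one needs an analogous but more delicate multi-chain structural analysis to obtain the same bound.

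Combining these bounds, $|\mathcal{N}_{n,k}| \le n^{O(1)}\binom{n}{k-1-t}$, and Proposition~\ref{prop:asymptotics of binomial coefficient ratio}, applied with $c = (k-1)/n \in (0.111, 0.5)$, yields $\binom{n}{k-1-t}/\binom{n}{k-1} \le \exp(-\Omega(n/\log n))$, which absorbs the polynomial overhead. The main obstacle is the case $|B| \ge 3$: Lemma~\ref{lem:divisibility by a simple atom} no longer suffices, and a more refined structural argument (potentially via sumset-rigidity results) is required; the lower bound $k > 0.111 n$ appears to be the precise threshold at which the Shitov chain analysis, suitably optimized, provides the requisite $\Omega(n/\log n)$ savings.
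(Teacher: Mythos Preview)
Your reduction to bounding $|\mathcal{N}_{n,k}|$ is correct, and the final step via Proposition~\ref{prop:asymptotics of binomial coefficient ratio} is the right way to finish. But the core of the argument is missing: you explicitly flag the case $|B| \ge 3$ as an unresolved obstacle, and even in the $|B|=2$ case the claimed bound $N(B) \le n^{O(1)}\binom{n}{k-1-t}$ with $t = \Omega(n/\log n)$ is asserted (``counting such configurations \ldots\ yields'') rather than proved. Your diagnosis of where the threshold $0.111$ comes from is also off.

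The paper avoids any structural analysis of $A$ by splitting on $|B|+|C|$ rather than on $|B|$. Fix $d = \varepsilon n/\log n$ for small $\varepsilon$. When $|B|+|C| < k-d$, no information about $A = B+C$ is used at all: one simply bounds the number of pairs $(B,C)$ via Vandermonde, obtaining at most $n^2(k-d)\binom{n}{k-d}$, and Proposition~\ref{prop:asymptotics of binomial coefficient ratio} (with $t = d-1$) supplies the factor $\exp(-\Omega(n/\log n))$. When $|B|+|C| \ge k-d$, one invokes \cite[Lemma~2.2]{yS14} as a black box to bound the number of such pairs by $n^{2d+4}2^{n/2}$. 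The hypothesis $k > 0.111n$ enters only here, and not through any chain argument: it guarantees $H(k/n) > \tfrac12\log 2$, so by Proposition~\ref{prop:asymptotics of binomial coefficients} the denominator $\binom{n}{k-1}$ exceeds $2^{n/2}$ by a factor $\exp(\Omega(n))$, which absorbs the prefactor $n^{2d+4} = \exp(2\varepsilon n + O(\log n))$ once $\varepsilon$ is chosen small enough. Thus both your run-counting for $|B|=2$ and the missing extension to $|B|\ge 3$ are unnecessary detours; the decomposition you want is by the \emph{total} size $|B|+|C|$, and the ``hard'' regime is handled entirely by Shitov's lemma rather than by a fixed-$B$ structural count.
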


\begin{proof}
	Let $d\in \rr$ be a constant that we will choose later. We will count the number of non-atoms. This is equal to the number of sets $A,B\subset \nn_0$ with $|A+B|=k$ and $\max A+\max B\le n$, such that $0\in A\cap B$. By \cite[Lemma~2.2]{yS14}, there are at most $n^{2d+4}2^{n/2}$ such pairs $(A,B)$ with $|A|+|B|\ge k-d$. The number of such pairs $(A,B)$ with $|A|+|B|<k-d$ is at most
	\[
		n\sum_{t=1}^{k-d}\sum_{m=1}^{n-1}\sum_{a+b=t}\binom{m}{a}\binom{n-m}{b} \le n(k-d)\sum_{m=1}^{n-1}\sum_{a+b=k-d}\binom{m}{a}\binom{n-m}{b}, 
	\]
	where $m$ represents $\max A$. Note that $\sum_{a+b=t}\binom{m}{a}\binom{n-m}{b}$ is the coefficient of $x^t$ in $(1+x)^m(1+x)^{n-m}=(1+x)^n$, or $\binom{n}{t}$. Therefore, the number of non-atoms is at most
	\[
		n^{2d+4}2^{n/2}+n^2(k-d)\binom{n}{k-d}.
	\]
	In other words, for all $d\in \rr$,
	\[
		1-\frac{\alpha_{n,k}}{\binom{n}{k-1}}\le \frac{n^{2d+4}2^{n/2}}{\binom{n}{k-1}}+\frac{n^2(k-d)\binom{n}{k-d}}{\binom{n}{k-1}}.
	\]
	Let $d=\frac{\varepsilon n}{\log n}$ for a sufficiently small $\varepsilon$. Because $H(0.111)>\log(2^{1/2})$, it follows from Proposition~\ref{prop:asymptotics of binomial coefficients} that
	\[
		1-\frac{\alpha_{n,k}}{\binom{n}{k-1}}\le \exp(-\Omega(n))+\frac{n^2(k-d)\binom{n}{k-d}}{\binom{n}{k-1}}.
	\]
	Therefore by Proposition~\ref{prop:asymptotics of binomial coefficient ratio},
	\[
		1-\frac{\alpha_{n,k}}{\binom{n}{k-1}}\le \exp\left(-\Omega\left(\frac{n}{\log n}\right)\right),
	\]
	and the result follows.
\end{proof}

It is worth noting that Theorem~\ref{thm:asymptotic result for alpha_{n,k} based on Shitov's work} implies a weaker result regarding unimodality of $(\alpha_{n,k})_{k\ge 1}$.

\begin{cor} \label{cor:unimodality result for alpha_{n,k} based on Shitov's work}
	For each $n \in \nn$, the sequence $(\alpha_{n,k})_{k\ge 1}$ is increasing for $0.111n<k<0.5n$. In other words, the sequence is unimodal for at least $0.389n-O(1)$ values of $k$.
\end{cor}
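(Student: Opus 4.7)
The plan is to derive this directly from Theorem~\ref{thm:asymptotic result for alpha_{n,k} based on Shitov's work} by combining its lower bound on $\alpha_{n,k+1}$ with the trivial upper bound $\alpha_{n,k} \le \binom{n}{k-1}$, and then comparing the resulting ratio to $1$. The trivial upper bound holds because every element of $\aaa_{n,k}$ contains $0$ (it is an element of $\Pfino(\nn_0)$) and its remaining $k-1$ members are chosen from $\ldb 1, n \rdb$; thus $|\aaa_{n,k}| \le \binom{n}{k-1}$.

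First, I would fix $n$ large and $k$ an integer with $0.111n < k < 0.5n$, so that $k+1 \le \lfloor n/2 \rfloor$ also lies in (or at the boundary of) the range where Theorem~\ref{thm:asymptotic result for alpha_{n,k} based on Shitov's work} applies. Writing $\varepsilon_n := \exp(-\Omega(n/\log n))$ for brevity, this yields
\[
\frac{\alpha_{n,k+1}}{\alpha_{n,k}} \;\ge\; \frac{\binom{n}{k}(1-\varepsilon_n)}{\binom{n}{k-1}} \;=\; \frac{n-k+1}{k}\,(1-\varepsilon_n).
\]
Next, I would observe that for integer $k$ with $k < 0.5n$, one has $k \le \lfloor (n-1)/2 \rfloor$ and hence $n - 2k + 1 \ge 2$. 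Consequently
\[
\frac{n-k+1}{k} \;=\; 1 + \frac{n-2k+1}{k} \;\ge\; 1 + \frac{2}{n}.
\]
Since $\varepsilon_n$ decays faster than any polynomial in $1/n$, in particular $\varepsilon_n = o(1/n)$, the inequality $(1 + 2/n)(1 - \varepsilon_n) > 1$ holds for all $n$ sufficiently large. This forces $\alpha_{n,k+1} > \alpha_{n,k}$ for every integer $k$ in the stated range, establishing that $(\alpha_{n,k})_{k \ge 1}$ is strictly increasing across this window.

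Finally, counting the number of integers $k$ in the open interval $(0.111n, 0.5n)$ gives $\lfloor 0.5n \rfloor - \lceil 0.111n \rceil - 1 = 0.389n - O(1)$, which yields the claimed unimodality on a window of length at least $0.389n - O(1)$. I do not expect any serious obstacle here: the only point requiring care is verifying that the subexponentially small error $\varepsilon_n$ coming from Theorem~\ref{thm:asymptotic result for alpha_{n,k} based on Shitov's work} is indeed dominated by the modest $\Omega(1/n)$ gap between consecutive binomial coefficients when $k$ approaches $n/2$, and this follows immediately from the super-polynomial decay of $\varepsilon_n$.
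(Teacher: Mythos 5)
Your proof is correct and is essentially the same argument the paper gives, just spelled out in more detail: the paper's one-line chain $\alpha_{n,k+1}=\binom{n}{k}(1-o(1))>\binom{n}{k}\tfrac{k}{n-k+1}=\binom{n}{k-1}\ge\alpha_{n,k}$ is exactly your combination of the lower bound from Theorem~\ref{thm:asymptotic result for alpha_{n,k} based on Shitov's work}, the trivial upper bound $\alpha_{n,k}\le\binom{n}{k-1}$, and the binomial ratio. Your extra step verifying that the $\Theta(1/n)$ gap near $k\approx n/2$ dominates the $\exp(-\Omega(n/\log n))$ error is a point the paper leaves implicit, and is worth making explicit.
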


\begin{proof}
	For $0.111n<k<0.5n$, we see that
	\[
		\alpha_{n,k+1} = \binom{n}{k}(1-o(1)) > \binom{n}{k}\frac{k}{n-k+1} = \binom{n}{k-1}\ge \alpha_{n,k},
	\]
	by Proposition~\ref{thm:asymptotic result for alpha_{n,k} based on Shitov's work}, as desired.
\end{proof}

\bigskip
\section{A Probabilistic Approach}
\label{sec:probabilistic approach}

In this section, we establish an asymptotic formula generalizing that one given in Theorem~\ref{thm:asymptotic result for alpha_{n,k} based on Shitov's work}. This result is highly motivated by \cite[Section~6.5]{BG24}. As a first application of the established asymptotic formula, we prove almost complete unimodality of the finite sequence $(\alpha_{n,k})_{1 \le k \le n+1}$ for any $n \in \nn$. This result generalizes Corollary~\ref{cor:unimodality result for alpha_{n,k} based on Shitov's work}. As a second application of the asymptotic formula, we later compute all the moments of the random variables $X_n \colon \mathcal{A}_n \to \nn_0$ defined by the assignments $A \mapsto |A|$, whose probability mass function is given by
\[
	\pp(X_{N,n}=k) = \frac{\alpha_{n,k}}{\alpha_n}.
\]

\medskip
\subsection{An Asymptotic Formula to Improve Unimodality} 

Before we establish an asymptotic formula for the sequence $(\alpha_{n,k})_{n,k \ge 1}$, we need to argue the following technical lemma, which is an analog of \cite[Lemma~6.5]{BG24} for our situation.

\begin{lemma} \label{lem:BG modification}
	Let $n\in \nn$ and $\epsilon\in \rr$ such that $0<\epsilon<1$. Let $b = (b_1, \dots, b_r)$ and $z = (z_1, \dots, z_r)$ be vectors in $\nn_0^r$ with $b_1 < \dots < b_r < n$ and $n - b_r = \Omega(n)$ while $z \in \{0,1\}^r$. Let $A=(a_1, \dots, a_n)$ be an element of $\{0,1\}^n$, chosen uniformly at random, such that $\sum_{i=1}^n a_i=\epsilon n$. Let $A_{b,z} \colon \{0,1\}^n \to \rr$ be the random variable denoting the number of indices $\ell$ such that $a_{\ell + b_i} = z_i$ (for every $i \in \ldb 1,r \rdb$). Then
	\[
		\pp\big( |A_{b,z} - \mathbb{E}(A_{b,z})| \ge \lambda \mathbb{E}(A_{b,z}) \big)
	\]
	is $O(n^{-1})$ for any $\lambda > 0$.
\end{lemma}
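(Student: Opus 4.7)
The plan is to apply Chebyshev's inequality to $A_{b,z}$, so I need sharp estimates of its mean and variance. I will decompose $A_{b,z} = \sum_{\ell} I_\ell$, where $\ell$ ranges over the $\Theta(n)$ indices with $1 \le \ell+b_1 \le \ell+b_r \le n$ and $I_\ell$ is the indicator of the event $\{a_{\ell+b_i}=z_i \text{ for all } i \in \ldb 1,r \rdb\}$. Because $A$ is uniformly distributed among the $\binom{n}{\epsilon n}$ binary strings of weight $\epsilon n$, the $r$-tuple $(a_{\ell+b_1},\dots,a_{\ell+b_r})$ follows a hypergeometric-like law, and a direct count of compatible strings gives
\[
\ee(I_\ell) = \epsilon^{w}(1-\epsilon)^{r-w} + O(1/n), \qquad \text{where } w = \sum_{i=1}^{r} z_i.
\]
Since $r$ is fixed and $n-b_r = \Omega(n)$, this yields $\ee(A_{b,z}) = \Theta(n)$.

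Next I will bound $\text{Var}(A_{b,z}) = \sum_\ell \text{Var}(I_\ell) + \sum_{\ell\ne m} \text{Cov}(I_\ell,I_m)$. The diagonal sum is trivially $O(n)$. For the off-diagonal terms I split into two cases depending on whether the index sets $\{\ell+b_i\}_i$ and $\{m+b_j\}_j$ overlap. An overlap forces $\ell - m \in \{b_j - b_i : i,j \in \ldb 1,r \rdb\}$, a set of at most $r^2$ values, so there are only $O(r^2 n) = O(n)$ overlapping pairs, each contributing $O(1)$ to the sum. For the remaining, disjoint-index pairs, the coordinates involved would be independent if $A$ were i.i.d.\ Bernoulli$(\epsilon)$; the fixed-weight constraint perturbs the joint probability by a multiplicative factor $1+O(1/n)$, so each such covariance is $O(1/n)$, and the total across the $\Theta(n^2)$ disjoint pairs is $O(n)$. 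Combining the contributions gives $\text{Var}(A_{b,z}) = O(n)$.

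Applying Chebyshev's inequality then yields, for any $\lambda>0$,
\[
\pp\big(|A_{b,z} - \ee(A_{b,z})| \ge \lambda \ee(A_{b,z})\big) \le \frac{\text{Var}(A_{b,z})}{\lambda^{2}\ee(A_{b,z})^{2}} = \frac{O(n)}{\lambda^{2}\cdot \Theta(n^{2})} = O(n^{-1}),
\]
which is the desired conclusion. The main obstacle is the disjoint-index covariance bound: quantifying the departure of the fixed-weight uniform law from the product Bernoulli law. Concretely, this reduces to comparing ratios of the form $\binom{n-2r}{\epsilon n - s}/\binom{n}{\epsilon n}$ with $\epsilon^{s}(1-\epsilon)^{2r-s}$ and showing the discrepancy is $O(1/n)$; this is routine via Stirling-like expansions but must be carried out uniformly across the finitely many patterns in $\{0,1\}^{2r}$. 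Everything else reduces to careful but standard bookkeeping.
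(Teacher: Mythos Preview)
Your proof is correct and follows the same approach as the paper: apply Chebyshev after writing $A_{b,z}$ as a sum of indicator variables and bounding the variance by separating overlapping from non-overlapping index pairs. You are in fact more careful than the paper on one point---the paper asserts that disjoint-index indicators are independent (hence have covariance zero), which is not literally true under the fixed-weight uniform law; your $O(1/n)$ bound on each disjoint-pair covariance is the correct argument and still yields $\mathrm{Var}(A_{b,z})=O(n)$.
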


\begin{proof}
	By Chebyshev's inequality,
	\[
		\mathbb{P}(|A_{b,z}-\mathbb{E}(A_{b,z})| \ge \lambda\mathbb{E}(A_{b,z}))\le \frac{\mathrm{Var}(A_{b,z})}{\lambda^2 \mathbb{E}(A_{b,z})^2}.
	\]
	By linearity of the expectation, $\mathbb{E}(A_{b,z})=\Theta(n)$. As such, 
	\[
		\lambda^2 \mathbb{E}(A_{b,z})^2=\Theta(n^2).
	\]
	It hence suffices to show that $\mathrm{Var}(A_{b,z})=O(n)$. One can split
	\[
		A_{b,z}=\sum_{i=1}^{n-b_r}A^i_{b,z},
	\]
	where $A^i_{b,z}$ is the indicator random variable for the equality $(a_{i+b_1}, a_{i+b_2}, \dots)=(z_1, z_2, \dots)$. As a consequence, one obtains that
	\[
		\mathrm{Var}(A_{b,z})=\sum_{1\le i\le n-b_r}\mathrm{Var}(A^i_{b,z})+\sum_{1\le i_1<i_2\le n-b_r}\mathrm{Cov}(A^{i_1}_{b,z}, A^{i_2}_{b,z}).
	\]
	The covariance $\mathrm{Cov}(A^{i_1}_{b,z}, A^{i_2}_{b,z})$ is $0$ whenever the sequences corresponding to indices $i_1$ and $i_2$ do not intersect, because in this case $A^{i_1}_{b,z}$ and $A^{i_2}_{b,z}$ are independent. Since each sequence intersects $O(1)$ other sequences, there are $O(n)$ nonzero covariance terms. Because $A^i_{b,z}\in \{0,1\}$ for all $i \in \ldb 1, n-b_r\rdb$, the result follows.
\end{proof}

We are in a position to provide an asymptotic formula for $(\alpha_{n,k})_{n,k \ge 1}$.

\begin{theorem} \label{thm:asymptotic formula for alpha_{n,k} based on BG approach}
	Let $\epsilon$ be a positive constant such that $0<\epsilon<1$. Then there exists a function $\gamma_{\epsilon}:\nn \to \rr$ such that $\gamma_{\epsilon}(n)\to 0$ as $n\to\infty$, and
	\begin{equation*}
		\alpha_{n,\epsilon n} = \binom{n}{\epsilon n-1} (1 - \gamma_{\epsilon}(n))
	\end{equation*}
	for all $n\in \nn$ such that $\epsilon n$ is an integer. In fact, we may have $\gamma_{\epsilon}(n)=O(n^{-1})$.
\end{theorem}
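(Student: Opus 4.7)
The plan is to bound the number of non-atoms $A$ with $0 \in A$, $|A| = \epsilon n$, and $\max A \le n$ by $O(n^{-1}) \binom{n}{\epsilon n - 1}$, which gives $\gamma_\epsilon(n) = O(n^{-1})$. Equivalently, viewing such an $A$ as a uniformly random binary vector $(a_0, \dots, a_n)$ with $a_0 = 1$ and $\sum_{i=0}^n a_i = \epsilon n$, it suffices to prove that $A$ is a non-atom with probability at most $O(n^{-1})$.

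I would extend the Shitov-style argument of Theorem~\ref{thm:asymptotic result for alpha_{n,k} based on Shitov's work} so that it applies to every $\epsilon \in (0,1)$. Every non-atom admits a decomposition $A = B + C$ with $|B|, |C| \ge 2$, and Lemma~\ref{lem:size of the addition} forces $|B| + |C| \le \epsilon n + 1$. Fixing $d = \Theta(n/\log n)$, I would split such decompositions into two families depending on whether $|B| + |C| \le \epsilon n - d$ (Family~A) or $|B| + |C| > \epsilon n - d$ (Family~B). For Family~A, the combinatorial bookkeeping used in the proof of Theorem~\ref{thm:asymptotic result for alpha_{n,k} based on Shitov's work} bounds the number of pairs by $n^2(\epsilon n)\binom{n}{\epsilon n - d}$; dividing by $\binom{n}{\epsilon n - 1}$ and invoking Proposition~\ref{prop:asymptotics of binomial coefficient ratio} yields an $\exp(-\Omega(n/\log n)) = O(n^{-1})$ contribution to the fraction of non-atoms.

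For Family~B, the small sumset expansion $|B + C| - (|B| + |C| - 1) \le d + 1$ forces, by Lemma~\ref{lem:size of the addition}(2) and its approximate version, that $B, C$ lie near arithmetic progressions with a common difference $b$, and so does $A$ modulo $O(d)$ outliers. Crucially, $(|B| + |C| - 2) b \le \max B + \max C \le n$, so $b \le n/(|B| + |C| - 2) = O_\epsilon(1)$, leaving only finitely many candidate common differences. For each such $b$, I would apply Lemma~\ref{lem:BG modification} with $r = 3$, $(b_1, b_2, b_3) = (0, b, 2b)$, and $z = (0, 1, 0)$, counting positions $\ell$ where $(a_\ell, a_{\ell+b}, a_{\ell+2b}) = (0, 1, 0)$; these correspond to ``$b$-isolated'' elements of $A$. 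The expected count of such positions is $\Theta(n)$, so the Chebyshev bound in Lemma~\ref{lem:BG modification} ensures that this count is $\Theta(n)$ with probability $1 - O(n^{-1})$. A near-AP with common difference $b$ has only $O(d) = o(n)$ $b$-isolated elements, so random $A$ lies in Family~B for this particular $b$ with probability at most $O(n^{-1})$. Union-bounding over the $O_\epsilon(1)$ candidate differences preserves the $O(n^{-1})$ bound.

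The main obstacle is making the ``approximate AP'' step of Family~B quantitative: one must convert the sumset-expansion bound $|B + C| - (|B| + |C| - 1) \le d + 1$ into a precise structural statement saying that $A$ has at most $O(d)$ non-AP outlier elements, and hence at most $O(d) = o(n)$ $b$-isolated positions. The boundedness $b = O_\epsilon(1)$ of the candidate common difference is what saves the union bound from losing an $n$-factor; once this structure is pinned down, the remaining estimates follow routinely from Lemma~\ref{lem:BG modification} and Proposition~\ref{prop:asymptotics of binomial coefficient ratio}.
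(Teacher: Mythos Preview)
Your Family~B argument has a real gap. The ``approximate version'' of Lemma~\ref{lem:size of the addition}(2) that you invoke is essentially the Freiman/Lev--Smeliansky $3k-4$ theorem for distinct summands, and its hypothesis requires the defect $|B+C|-(|B|+|C|-1)$ to be strictly smaller than $\min(|B|,|C|)-1$. In your Family~B the defect is at most $d+1=\Theta(n/\log n)$, so the structural conclusion is available only when $\min(|B|,|C|)\gtrsim n/\log n$. But Family~B also contains decompositions with $\min(|B|,|C|)=2$: for $B=\{0,b\}$ and $|C|\approx \epsilon n$ one has $|B|+|C|>\epsilon n-d$, yet $b$ is constrained only by $b\le n-\max C<(1-\epsilon)n+O(d)$. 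There is no $O_\epsilon(1)$ bound on the ``common difference'' in this regime, so your union bound over finitely many candidate $b$ collapses. You flag this step as the main obstacle, but it is not a matter of making the structure quantitative; for small $\min(|B|,|C|)$ the near-AP structure with bounded step simply need not exist.

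The paper's proof avoids any Freiman-type input by using a three-way split with \emph{constant} thresholds. A separate case $S_2$ isolates $\min(|B|,|C|)\le r$ for a fixed $r\in\nn$: writing $B=\{0,b_1,b_1+b_2,\dots\}$, the identity $A=B+C$ forces, for every $a\in A$, that $a+b_1\in A$ or some $a-b_i\in A$, a forbidden $(r{+}2)$-pattern to which Lemma~\ref{lem:BG modification} applies (after first disposing of $\max B\ge \omega n$ by a crude count). The complementary case $S_3$ has both $|B|+|C|>\gamma n$ and $\min(|B|,|C|)>r$; here one fixes any $r$-subset $D\subseteq C$ and observes that each of the $\ge \gamma n/2$ elements of $B$ produces an all-ones pattern $b+D\subseteq A$, whereas a random $A$ expects only about $n\epsilon^r$ such patterns. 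Choosing $r$ large enough that $2\epsilon^r<\gamma$, Lemma~\ref{lem:BG modification} again yields the $O(n^{-1})$ bound---no arithmetic-progression structure is needed anywhere.
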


\begin{proof}
	Let $\gamma\in \mathbb{R}_{>0}$ and $r\in \mathbb{N}$ be constants that we will specify later. Let $n \in \nn$ such that $\epsilon n$ is an integer. We choose a subset $A\subseteq \ldb 0, n\rdb$ containing $0$ such that $|A| = \epsilon n$, and the goal is to show that $A$ is not an atom with probability $O(n^{-1})$. In other words, the probability that there exist sets $B,C$ with $A=B+C$ and $\min(|B|,|C|)\ge 2$ is $O(n^{-1})$. Define the following three sets:
	\begin{enumerate}
		\item Let $S_1$ be the set of such $A$ with a decomposition $A=B+C$ such that $|B|+|C|\le \gamma n$.
		\smallskip
		
		\item Let $S_2$ be the analogous set for $\min(|B|, |C|)\le r$.
		\smallskip
		
		\item Let $S_3$ be the set for $|B|+|C|>\gamma n$ and $\min(|B|,|C|)>r$.
	\end{enumerate}
	Note $S_1\cup S_2\cup S_3$ is the set of all $A$ that can be decomposed as $B+C$ with $\min(|B|,|C|)\ge 2$. Therefore, we wish to show that $\mathbb{P}(A\in S_1\cup S_2\cup S_3)=o(1)$. We split into three cases.
	\smallskip
	
	\textsc{Case 1}: To upper bound $\mathbb{P}(A\in S_1)$, we upper bound $|S_1|$ by counting the number of $B,C$ with $|B|+|C|\le \gamma n$. Observe that
	\[
		|S_1|\le \sum_{t\le \gamma n}\sum_{\substack{b+c=t \\ \min(b,c)\ge 2}} \binom{n}{b-1}\binom{n}{c-1} \le \sum_{t \le \gamma n}\sum_{\substack{b+c=t-2 \\ \min(b,c)\ge 1}}\binom{n}{b}\binom{n}{c}.
	\]
	Notice that $\sum_{b+c=t-2}\binom{n}{b}\binom{n}{c}$ is the coefficient of $x^{t-2}$ in $(1+x)^n\cdot (1+x)^n=(1+x)^{2n}$, which is $\binom{2n}{t-2}$. Hence
	\[
		|S_1|\le \sum_{t\le \gamma n}\binom{2n}{t-2}\le n\binom{2n}{\gamma n}
	\]
	with the assumption $\gamma<1$. Therefore
	\[
		\mathbb{P}(A\in S_1)\le \frac{n\binom{2n}{\gamma n}}{\binom{n}{\epsilon n-1}}.
	\]
	\smallskip
	
	\textsc{Case 2}: To compute $\mathbb{P}(A\in S_2)$, assume that $|B|\le r$. The case with $|C|\le r$ is disjoint and of equal probability because $|B+C|\le |B||C|$, and $\epsilon n\le r^2$ is impossible asymptotically as $r$ is a constant. Let $0<\omega<1$ be a constant that we will choose later. If $\max B\ge \omega n$ then $C\subseteq \ldb 0, (1-\omega)n\rdb$, so there are at most $n^r2^{(1-\omega)n}$ possible $A$. Now let us consider $\max B\le \omega n$. Write
	\[
		B=\{0,b_1,b_1+b_2,\dots,b_1+b_2+\dots+b_r\},
	\]
	where $b_1>0$ and $b_i\ge 0$ for all $i\ge 2$. For all $n_0\in A$, either $n_0+b_1\in A$ or some $n_0-b_i\in A$. Let $f:\mathbb{N}\to \{0,1\}$ be such that $f(x)=1$ if and only if $x\in A$. Therefore, for all $n_0\in \nn_0$,
	\[
		(f(n_0), f(n_0+b_1), f(n_0-b_1),\dots,f(n_0-b_r))\ne (1,0,0,\dots,0).
	\]
	By Lemma~\ref{lem:BG modification} with $\lambda=1$, there are $\binom{n}{\epsilon n-1}O(n^{-1})$ possible $A$ in this case. Hence
	$$\mathbb{P}(A\in S_2)\le O(n^{-1})+\frac{2n^r 2^{(1-\omega)n}}{\binom{n}{\epsilon n-1}}.$$
	\smallskip
	
	\textsc{Case 3}: To compute $\mathbb{P}(A\in S_3)$ assume without loss of generality that $|B|\ge \gamma n/2$ and $|C|\ge r$, by symmetry and the Pigeonhole Principle. Let $D\subseteq C$ such that $|D|=r$. Then for all $n_0\in B$, the inclusion $n_0+D \subseteq A$ holds. In other words, there are at least $\gamma n/2$ values of $n_0$ such that $f(n_0+d_i)=1$ for all $i$. On the other hand, the expected number of such $n_0$ is at most $(n-\max D)\epsilon^r\le n\epsilon^r$ by linearity of expectation and the fact that $\frac{\epsilon n-t}{n-t}<\epsilon$ for all $t\le r$. Hence by Lemma~\ref{lem:BG modification}, as long as $\gamma>2\epsilon^r$,
	\[
		\mathbb{P}(A\in S_3)=O(n^{-1}).
	\]
	Combining the cases, when $\gamma>2\epsilon^r$ we obtain that, by the union bound,
	\begin{align*}
		\mathbb{P}(A\in S_1\cup S_2\cup S_3)&\le \mathbb{P}(A\in S_1)+\mathbb{P}(A\in S_2)+\mathbb{P}(A\in S_3)  \\ &\le O(n^{-1})+\frac{n\binom{2n}{\gamma n}}{\binom{n}{\epsilon n-1}}+\frac{2n^r2^{(1-\omega)n}}{\binom{n}{\epsilon n-1}}.
	\end{align*}
	Therefore, the equality $\mathbb{P}(A\in S_1\cup S_2\cup S_3) = O(n^{-1})$ will follows after we verify that, for some positive $\varepsilon$, that
	\begin{enumerate}
		\item $\gamma>2\epsilon^r$,
		\smallskip
		
		\item $\varepsilon+\frac{1}{n}\log \binom{2n}{\gamma n}<\frac{1}{n}\log \binom{n}{\epsilon n}$, and
		\smallskip
		
		\item $\varepsilon+\frac{1}{n}\log(2n^r 2^{(1-\omega)n})<\frac{1}{n}\log \binom{n}{\epsilon n}$.
	\end{enumerate}
	By Proposition~\ref{prop:asymptotics of binomial coefficients}, (2) is implied by $2H(\gamma/2)<H(\epsilon)$, and (3) is implied by $n(1-\omega)\log 2<H(\epsilon)$. Clearly, we can choose $\gamma, \omega$ small enough such that (2) and (3) hold (because $H$ is increasing on $(0,1/2)$). Then, we may choose $r$ large enough so that (1) is satisfied, and we are done.
\end{proof}

As a consequence of Theorem~\ref{thm:asymptotic formula for alpha_{n,k} based on BG approach}, we obtain that, for each $n \in \nn$, the sequence $(\alpha_{n,k})_{k \ge 1}$ is ``almost unimodal".

\begin{cor}
	For each $n \in \nn$, there are $n(1-o(1))$ values of $k$ such that the unimodality inequality holds for $(\alpha_{n,k})_{n,k}$, that is, $\alpha_{n,k} < \alpha_{n,k+1}$ if $k < n/2$ and $\alpha_{n,k} > \alpha_{n,k+1}$ otherwise.
\end{cor}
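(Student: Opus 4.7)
The plan is to leverage Theorem~\ref{thm:asymptotic formula for alpha_{n,k} based on BG approach} uniformly in $\epsilon$ to control the ratio $\alpha_{n,k+1}/\alpha_{n,k}$ throughout the bulk of the range $k \in \ldb 1, n+1 \rdb$. I would first verify, by inspecting the proof of that theorem, that the estimate $\alpha_{n,k} = \binom{n}{k-1}(1 + O(n^{-1}))$ holds uniformly in $k$ over any window $\delta n \le k \le (1-\delta) n$ for each fixed $\delta \in (0, 1/2)$. The parameters $\gamma$, $\omega$, $r$ in that proof enter only through the entropy $H(\epsilon)$, which is bounded below by $H(\delta) > 0$ on this range, so $\gamma, \omega, r$ can be fixed as functions of $\delta$ alone; and the implicit constant in Lemma~\ref{lem:BG modification} is uniform in $\epsilon$ once $r$ and the relevant ratio $\lambda$ are fixed. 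Hence the implicit constant in $O(n^{-1})$ depends only on $\delta$.

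Given this uniformity, the heart of the proof is a direct ratio calculation. For $k \in \ldb \delta n, (1-\delta) n \rdb$,
\[
	\frac{\alpha_{n, k+1}}{\alpha_{n, k}} \;=\; \frac{n-k+1}{k}\,\bigl(1 + O_\delta(n^{-1})\bigr).
\]
The factor $(n-k+1)/k$ exceeds $1$ by at least a positive constant times $1/n$ whenever $k \le n/2 - C_\delta$, and lies below $1$ by the same margin when $k \ge n/2 + C_\delta$, provided $C_\delta$ is chosen large enough to swamp the $O_\delta(n^{-1})$ error term. Therefore the unimodality inequality from the statement is satisfied at every $k \in \ldb \delta n, (1-\delta) n \rdb$ except for at most $O_\delta(1)$ exceptional indices clustered around $n/2$.

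Summing up, each fixed $\delta > 0$ produces at least $n(1 - 2\delta) - O_\delta(1)$ good values of $k$, so the proportion of good values is at least $1 - 2\delta$ in the limit. Since $\delta$ was arbitrary, the total count is $n(1 - o(1))$, as required. The main obstacle is the uniformity in the first step: the proof of Theorem~\ref{thm:asymptotic formula for alpha_{n,k} based on BG approach} is written for a fixed $\epsilon$, so one has to revisit each of its three cases and the application of Lemma~\ref{lem:BG modification} to confirm that no implicit constant degenerates as $\epsilon$ varies over a compact subinterval of $(0, 1)$; once that is settled, the binomial ratio estimate and the subsequent bookkeeping are routine.
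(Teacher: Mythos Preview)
Your approach is essentially the same as the paper's: fix a small parameter, invoke Theorem~\ref{thm:asymptotic formula for alpha_{n,k} based on BG approach} to replace $\alpha_{n,k}$ by $\binom{n}{k-1}$ throughout the bulk range, compare the resulting binomial ratios across $k = n/2$, and then let the parameter tend to zero. Your explicit attention to the uniformity of the $O(n^{-1})$ error over $\epsilon \in [\delta, 1-\delta]$ and to the $O_\delta(1)$ possible exceptions near $k = n/2$ is in fact more careful than the paper's own write-up, which silently assumes both points.
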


\begin{proof}
	Let $\epsilon$ be a positive real constant. We prove that the appropriate unimodality inequality holds for $\epsilon n<k<(1-\epsilon)n$, which establishes the desired result as this statement holds true for all such $\epsilon$. There are two cases, which, in combination, prove the result.
	\smallskip

	\textsc{Case 1}: When $\epsilon n<k<n/2$, we see that $$\alpha_{n,k+1}=\binom{n}{k}(1-o(1))>\binom{n}{k}\frac{k}{n-k+1}=\binom{n}{k-1}\ge \alpha_{n,k}$$
	by Theorem~\ref{thm:asymptotic formula for alpha_{n,k} based on BG approach}.
	\smallskip

	\textsc{Case 2}: When $n/2\le k<(1-\epsilon)n$, we also have
	$$\alpha_{n,k+1}=\binom{n}{k}(1-o(1))<\binom{n}{k}\frac{k}{n-k+1}(1-o(1))=\binom{n}{k-1}(1-o(1))=\alpha_{n,k}$$
	by Theorem~\ref{thm:asymptotic formula for alpha_{n,k} based on BG approach}.
\end{proof}

\medskip
\subsection{The Random Variables Related to $(\alpha_{n,k})_{n,k \ge 1}$ and Their Moments}
%
%
%
%

We proceed to consider the sequence of random variables $(X_n)_{n \ge 1}$, whose $n$-th term 
is defined as follows:
\[
	X_n \colon \aaa_n \to \nn_0, \quad \text{where} \quad X_n(A) = |A|.
\]
For each $n \in \nn$, it is clear that $\sum_{k \in \nn_0} \pp(X_{N,n} = k) = 1$. Thus, the probability mass function of $X_{N,n}$ is given by
\[
	\pp(X_n = k) = \frac{|\aaa_{n,k}|}{|\aaa_n|} = \frac{\alpha_{n,k}}{\alpha_n}.
\]
We conclude this paper by proving that, for each $m \in \nn$, the sequence of moments $(\ee(X_n^m))_{n \ge 1}$ behaves asymptotically as that of a sequence $(\ee(Y_n^m))_{n \ge 1}$, where $Y_n$ is a binomially distributed random variable with parameters $n$ and $\frac12$.

\begin{theorem}
	For each $n \in \nn$, let $Y_n$ be a binomially distributed random variable with parameters $n$ and $\frac12$. Then, for each $r \in \nn$, the following equality holds:
	\begin{equation*}
		\lim_{n \to \infty} \frac{\mathbb{E}(X_n^r)}{\mathbb{E}(Y_n^r)} = 1.
	\end{equation*}
\end{theorem}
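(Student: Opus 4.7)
The plan is to pivot through the shifted binomial PMF $q_n(k) := \binom{n}{k-1}/2^n = \pp(Y_n + 1 = k)$, which by Theorem~\ref{thm:asymptotic formula for alpha_{n,k} based on BG approach} together with Shitov's asymptotic $\alpha_n = 2^n(1-o(1))$ agrees with $\pp(X_n = k) = \alpha_{n,k}/\alpha_n$ to leading order in the bulk. Since $Y_n \sim \mathrm{Bin}(n, \tfrac12)$ has $r$-th moment $\ee(Y_n^r) = (n/2)^r + O(n^{r-1}) \sim (n/2)^r$, the binomial expansion $\ee((Y_n+1)^r) = \sum_{i=0}^r \binom{r}{i} \ee(Y_n^i)$ combined with $\ee(Y_n^i) = O(n^i)$ for $i < r$ gives $\ee((Y_n+1)^r)/\ee(Y_n^r) = 1 + O(n^{-1})$. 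Hence it suffices to establish that $\ee(X_n^r) = \ee((Y_n+1)^r)(1+o(1))$.

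Fix $\delta \in (0, \tfrac12)$ and split the sum defining $\ee(X_n^r)$ into a central contribution over $k \in [\delta n, (1-\delta)n]$ and two tails. For the tails, the crude inequality $\alpha_{n,k} \le \binom{n}{k-1}$, valid because every atom in $\aaa_{n,k}$ is a $k$-subset of $\ldb 0, n \rdb$ containing $0$, combined with $\alpha_n \sim 2^n$, gives $\pp(X_n = k) \le (1+o(1))\,q_n(k)$. Thus the tail contribution to $\ee(X_n^r)$ is at most $n^r$ times the analogous tail of $Y_n+1$, which is exponentially small by standard large deviation estimates for the binomial and in particular is $o(\ee(Y_n^r))$. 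The same bound eliminates the corresponding tails in $\ee((Y_n+1)^r)$, so both expectations can be compared on the central window modulo a vanishing error.

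On the central window I would invoke Theorem~\ref{thm:asymptotic formula for alpha_{n,k} based on BG approach} uniformly in $\epsilon \in [\delta, 1-\delta]$: inspection of its proof shows that the constants $\gamma, \omega, r$ satisfying the three inequalities imposed there can be chosen simultaneously for all such $\epsilon$, because $H$ is bounded below on the compact set $[\delta, 1-\delta]$, so the resulting $O(n^{-1})$ relative error is in fact uniform in $k/n \in [\delta, 1-\delta]$. Combined with $\alpha_n = 2^n(1-o(1))$, this delivers $\pp(X_n = k) = q_n(k)(1+o(1))$ uniformly on the central window; multiplying by $k^r$, summing, and reattaching the negligible tails then yields $\ee(X_n^r) = \ee((Y_n+1)^r)(1+o(1))$, which closes the argument via the preliminary observation. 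I expect the main obstacle to be precisely this uniformity in $\epsilon$: it is not explicit in the statement of Theorem~\ref{thm:asymptotic formula for alpha_{n,k} based on BG approach} but does follow from revisiting its proof, and if one prefers to use that theorem as a black box, a backup is to cover $[\delta,1-\delta]$ by a fine grid of rational points, apply the theorem at each grid point, and interpolate to nearby $k$ using the local monotonicity of $\binom{n}{k-1}$ on the window.
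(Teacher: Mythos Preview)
Your approach is essentially the same as the paper's: both split the moment sum into a central window $[\delta n,(1-\delta)n]$ and two tails, bound the tails via $\alpha_{n,k}\le\binom{n}{k-1}$ together with a Hoeffding-type large deviation estimate for the binomial, and handle the bulk through Theorem~\ref{thm:asymptotic formula for alpha_{n,k} based on BG approach} (the paper fixes $\delta=1/10$ and organizes the algebra slightly differently, but the structure is identical, and your explicit pivot through $Y_n+1$ is if anything cleaner). You are in fact more scrupulous than the paper about the uniformity of $\gamma_\epsilon(n)\to 0$ over $\epsilon\in[\delta,1-\delta]$: the paper simply sets $\gamma(n)=\max_{\varepsilon\in[\delta,1-\delta]}\gamma_\varepsilon(n)$ and asserts $\gamma(n)\to 0$ without comment, whereas you correctly note that this requires revisiting the proof of Theorem~\ref{thm:asymptotic formula for alpha_{n,k} based on BG approach} to choose the constants $\gamma,\omega,r$ uniformly on the compact interval (which works, since $H$ is bounded below there). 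Your grid-and-interpolate backup, however, does not quite stand on its own: monotonicity of $\binom{n}{k-1}$ controls only the denominator, not $\alpha_{n,k}$ itself between grid points, so stick with the direct uniformity argument.
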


\begin{proof}
	For any $n,k \in \nn$, we write $\alpha_{n,k} = \binom{n}{k-1} - \beta_{n,k}$ for some $\beta_{n,k} \in \nn_0$. We have seen in~\eqref{eq:Shitov's asymptotic for alpha(n)} that $\alpha_n = 2^n(1 - o(1))$, which implies that 
	\begin{align*}
		\ee(X_n^r) \! = \! \frac{\sum_{k=1}^{n+1} k^r \alpha_{n,k}}{\alpha_n} \! = \! \frac{\sum_{k=1}^{n+1} k^r \alpha_{n,r}}{2^n(1 - o(1))}
		\! = \! \frac{\sum_{k=1}^{n+1} k^r \binom{n}{k-1}}{2^n(1 - o(1))} - \frac{\sum_{k=1}^{n+1} k^r \beta_{n,k}}{2^n(1 - o(1))}
		\! = \! \frac{\ee(Y_n^r)}{2^{n-1}(1 - o(1))} - \frac{\sum_{k=1}^{n+1} k^r \beta_{n,k}}{2^n(1 - o(1))},
	\end{align*}
	where the last equality follows from the identity $\ee(Y_n^r) = \frac12 \sum_{k=0}^n k^r \binom{n}{k}$. As a consequence,
	\[
		\lim_{n \to \infty} \frac{\ee(X_n^r)}{\ee(Y_n^r)} = \lim_{n \to \infty} \frac{2^{1-n}}{(1 - o(1))} \bigg( 1 - \frac{\sum_{k=1}^{n+1} k^r \beta_{n,k}}{\sum_{k=1}^{n+1} k^r \binom{n}{k-1}} \bigg) = 1-s(n), 
	\]
	where $s(n) := \sum_{k=1}^{n+1} k^r \beta_{n,k}/ \sum_{k=1}^{n+1} k^r \binom{n}{k-1}$. Now fix $\epsilon = \frac1{10}$, and then set
	\[
		s_1(n) := \frac{\sum_{k=1}^{\epsilon n} k^r \beta_{n,k}}{\sum_{k=1}^{n+1} k^r \binom{n}{k-1}},   \quad s_2(n) := \frac{\sum_{k=\epsilon n}^{(1-\epsilon)n} k^r \beta_{n,k}}{\sum_{k=1}^{n+1} k^r \binom{n}{k-1}},  \quad  \text{and} \quad s_3(n) := \frac{\sum_{k=(1-\epsilon)n}^{n+1} k^r \beta_{n,k}}{\sum_{k=1}^{n+1} k^r \binom{n}{k-1}}.
	\]
	Observe that the equality $s(n) = s_1(n) + s_2(n) + s_3(n)$ holds, so our proof is complete once we argue that $s_i(n) \to 0$ as $n \to \infty$ for every $i \in \ldb 1,3 \rdb$.
	
	We first argue that $s_2(n) \to 0$ as $n \to \infty$. For any $n \in \nn$ and $\epsilon\in \rr$ with $0<\epsilon<1$, the equality $\beta_{n,\epsilon n} = \binom{n}{\epsilon n-1}\big(1 - \alpha_{n,\epsilon n}/\binom{n}{\epsilon n-1}\big)$, along with Theorem~\ref{thm:asymptotic formula for alpha_{n,k} based on BG approach}, ensures the existence of a function $\gamma_{\epsilon} \colon \nn \to \rr$ with $\gamma_{\epsilon}(n) \to 0$ as $n \to \infty$ such that $\beta_{n,\epsilon n} = \binom{n}{\epsilon n-1} \gamma_{\epsilon}(n)$. Now consider the function $\gamma \colon \nn \to \rr$ defined by
	\[
		\gamma(n) = \max\big\{\gamma_{\varepsilon}(n) : \varepsilon \in [\epsilon, 1-\epsilon] \big\}.
	\]
	Observe that $\gamma(n) \to 0$ as $n \to \infty$. As a result,
	\[
		\lim_{n \to \infty} s_2(n) = \lim_{n \to \infty} \frac{\sum_{k=\epsilon n}^{(1-\epsilon)n} k^r \beta_{n,k}}{\sum_{k=1}^{n+1} k^r \binom{n}{k-1}} = \lim_{n \to \infty} \frac{\sum_{k=\epsilon n}^{(1-\epsilon)n} k^r \binom{n}{k-1} \gamma_{k/n}(n)}{\sum_{k=1}^{n+1} k^r \binom{n}{k-1}} \le \lim_{n \to \infty} \gamma(n) \frac{\sum_{k=\epsilon n}^{(1-\epsilon)n} k^r \binom{n}{k-1}}{\sum_{k=1}^{n+1} k^r \binom{n}{k-1}} = 0.
	\]
	Now, we will argue that $s_1(n), s_3(n) \to 0$ as $n \to \infty$. For all $n,k \in \nn$, the equality $\binom{n}{k-1} - \beta_{n,k} = \alpha_{n,k} > 0$ guarantees that the following inequalities hold:
	\[
		s'_1(n) := \frac{\sum_{k=1}^{\epsilon n} k^r \binom{n}{k-1}}{\sum_{k=1}^{n+1} k^r \binom{n}{k-1}} \ge\frac{\sum_{k=1}^{\epsilon n} k^r \beta_{n,k}}{\sum_{k=1}^{n+1} k^r \binom{n}{k-1}} = s_1(n)
	\]
	and
	\[
		\quad s'_3(n) := \frac{\sum_{k=(1-\epsilon)n}^{n+1} k^r \binom{n}{k-1}}{\sum_{k=1}^{n+1} k^r \binom{n}{k-1}} \ge \frac{\sum_{k=(1-\epsilon)n}^{n+1} k^r \beta_{n,k}}{\sum_{k=1}^{n+1} k^r \binom{n}{k-1}} = s_3(n).
	\]
	Because $s'_1(n) \le s'_3(n)$ for every $n \in \nn$, it suffices to show that $s'_3(n) \to 0$ as $n \to \infty$. Towards this, we first observe that
	\[
		s'_3(n) \le n^r \frac{\sum_{k=1}^{\epsilon n} \binom{n}{k}}{2^n} \le n^r 2 \exp\Big(-2\Big(\frac12 - \epsilon \Big)^2 n\Big) = 2n^r \exp(-0.32n),
	\]
	where the second inequality follows after applying Hoeffding's inequality to the sum of uniform random variables. As a consequence,
	\[
		\lim_{n \to \infty} s'_3(n) = \lim_{n \to \infty} n^r \exp(-0.32n) = 0,
	\]
	and so $s_1(n), s_3(n) \to 0$ as $n \to \infty$. Hence $\lim_{n \to \infty} s(n) = 0$, and the proof is complete.
\end{proof}

\bigskip
\section*{Acknowledgments}

This collaboration took place as part of CrowdMath, a year-long online math research program hosted by the MIT Math Department and the AoPS. All the authors would like to express their gratitude to the directors and organizers of CrowdMath for making this research experience possible. The authors are also grateful to Alfred Geroldinger and Salvatore Tringali for kindly providing some of the questions motivating the research project of CrowdMath 2023 and, therefore, this collaboration. In addition, the authors would like to thank Marly Gotti for carefully proof/reading earlier versions of this paper. During the period of this collaboration, the second author was kindly supported by the NSF award DMS-2213323.

\bigskip
\section*{Conflict of Interest Statement}

On behalf of all authors, the corresponding author states that there is no conflict of interest related to this paper.

\bigskip

\end{document}